\DeclareMathAlphabet{\mathbfit}{OT1}{cmr}{bx}{it} \DeclareRobustCommand{\vec}[1]{\mathbfit{#1}}
\newcommand{\bea}{\begin{eqnarray*}}
\newcommand{\eea}{\end{eqnarray*}}
\newcommand{\be}{\begin{eqnarray}}
\newcommand{\ee}{\end{eqnarray}}
\newcommand{\beq}{\begin{equation}}
\newcommand{\eeq}{\end{equation}}
\def\inte{\operatorname{Int}}
\newtheorem{thm}{Theorem}[section]
\newtheorem{cor}[thm]{Corollary}
\newtheorem{lem}[thm]{Lemma}
\newtheorem{prop}[thm]{Proposition}
\newtheorem{rem}[thm]{Remark}
\renewenvironment{proof}{{\itshape Proof:}}{\\ \qed \\}
 \def\er{\mathbb{R}}
 \def\en{\mathbb{N}}
\begin{document}
\nocite{*}

\title{ Hankel determinants of random moment sequences }

\author{
{\small Holger Dette} \\
{\small Ruhr-Universit\"at Bochum} \\
{\small Fakult\"at f\"ur Mathematik} \\
{\small 44780 Bochum, Germany} \\
{\small e-mail: holger.dette@rub.de}\\
\and
{\small Dominik Tomecki}\\
{\small Ruhr-Universit\"at Bochum }\\
{\small Fakult\"at f\"ur Mathematik}\\
{\small 44780 Bochum, Germany}\\
{\small e-mail: dominik.tomecki@rub.de}\\
%{\small FAX: +49 234 3214 559}\\
}

\maketitle

\begin{abstract}
For $ t \in [0,1]$ let  $\underline{H}_{2\lfloor  nt \rfloor} = ( m_{i+j})_{i,j=0}^{\lfloor nt \rfloor} $  denote the  Hankel matrix
of order $2\lfloor  nt \rfloor$ of a random vector
$(m_1,\ldots ,m_{2n})$  on the moment space $\mathcal{M}_{2n}(I)$
of all moments  (up to the order $2n$) of probability measures  on the interval $I \subset \er $. In this paper we study
the asymptotic properties  of the stochastic process $\{  \log \det \underline{H}_{2\lfloor  nt \rfloor} \}_{t\in [0,1]}$
as $n \to \infty$. In particular weak convergence and corresponding large deviation principles are
derived after appropriate standardization.

\end{abstract}

\medskip

Keyword and Phrases: Hankel determinant, random moment sequences, weak convergence,
large deviation principle, canonical moments, arcsine distribution

AMS Subject Classification: 60F05, 60F10, 30E05, 15B52

\section{Introduction} \label{sec1}
 \def\theequation{1.\arabic{equation}}
 \setcounter{equation}{0}

Hankel matrices  are well studied objects in mathematics with applications in various fields
such as orthogonal polynomials, random matrices or operator theory. Asymptotic properties
of functions  of non-random Hankel matrices
such as  the determinant,  condition number or smallest eigenvalue have been
studied by \cite{hirschman1966}, \cite{zamar2001}, \cite{basor2001} or \cite{bergszwarc2011} among others. Recently, random
Hankel matrices have also been considered in the literature with the main focus   on matrices with independent entries.
For example,
 \cite{bryc2006} studied the limiting spectral measure of large Hankel (and Toeplitz) matrices, while
some results regarding the operator norm can be found in \cite{bose2007}. \\
The present paper takes a different look at random Hankel matrices (more precisely, at their log-determinants) with 
not necessarily independent entries. Our investigations are motivated by the fact that Hankel matrices are usually used to characterize the solution of classical moment problems.
To be precise,
let   $I \subset \mathbb{R}$ denote an interval and define  $ \mathcal{P} (I) $  as  the set of all probability measures on the Borel field of
$I$ with existing moments. For a measure $\mu \in \mathcal{P} (I)$ we denote by
$$ m_k=  m_k(\mu) = \int_I x^k \mu (dx) \ ; \qquad k=0,1,2,\dots  $$
the $k$-th moment   and define
  \be
   \label{mominfty}
\mathcal{M} (I) = \left \{ \vec{m}(\mu) = (m_1(\mu),m_2(\mu),\dots )^t | \ \mu \in \mathcal{P} (I) \right \} \subset \mathbb{R}^{\mathbb{N}} \ee
as the set of all moment sequences. We denote by $\Pi _n$ ($n \in \mathbb{N}$) the
canonical projection onto the first $n$ coordinates and call
$\mathcal{M}_n (I) = \Pi_n \left( \mathcal{M} (I) \right) \subset \mathbb{R}^n$
the $n$-th moment space. The Hamburger moment problem is to decide
if  a given sequence $(m_n)_{n \in \mathbb{N}}$ is an element of $\mathcal{M} (\mathbb{R})$ and it is well known that 
this is the case if and only if the  Hankel matrices  $\underline{H}_{2k}=   (m_{i+j})^k_{i,j=0}$  are nonnegative definite for all
$k\in \en$ [see \cite{shohat1943}]. Moreover the vector 
 ${\vec m}_{2n}= (m_1,\ldots , m_{2n})$ is an element of the moment space  $  \mathcal{M}_{2n} (\mathbb{R})$ if and only if  the Hankel 
 matrix   $\underline{H}_{2n}=   (m_{i+j})^n_{i,j=0}$
 is nonnegative definite. Similar characterization can be obtained for the Stieltjes and
     Hausdorff  moment problem corresponding to measures on the half line $\mathbb{R}^+_0=[0,\infty)$ and the interval $[0,1]$, respectively.
\\
 \cite{chakemstu1993} considered the ``classical'' moment space
corresponding to measures on the interval $[0,1]$ [see \cite{karsha1953}, \cite{krenud1977}, for  some early references]
and  equipped $\mathcal{M}_n ([0,1]) $   with a uniform distribution. They   proved asymptotic normality of an appropriately
standardized version of a projection $\Pi_k ({\vec m_n})$ of a  uniformly distributed vector ${\vec m_n}$ on $\mathcal{M}_n([0,1])$
as $n\to \infty $.
\cite{gamloz2004}  investigated corresponding large deviation principles, while  \cite{lozada2005}
 studied similar problems for moment spaces corresponding to more general functions defined on a bounded set. More
 recently,  some of these  results have been generalized by \cite{detnag2012}  to the moment spaces $\mathcal{M}_n ([0,\infty))$ and $\mathcal{M}_n(\mathbb{R})$ corresponding to unbounded intervals.  \\
The present paper is devoted to the asymptotic analysis of Hankel determinants of random   moment vectors on $\mathcal{M}_{2n}(I)$. For example, if $
 \vec{m}_{2n}=
 (m_1,\dots,m_{2n})$ denotes a random vector uniformly distributed on the $2n$th moment space $\mathcal{M}_{2n}([0,1])$, then it is shown in this paper that an appropriately transformed and standardized version of the determinant of the random Hankel matrix  $\underline{H}_{2n}= (m_{i+j})^n_{i,j=0}$ converges weakly, that is
\begin{equation} \label{1.1}
\frac {2} {\sqrt{n}} \big \{ \log \det \underline{H}_{2n} - \log \det \underline{H}^0_{2n} + \frac {n}{2} \big\}  \stackrel{\mathcal{D}}{\longrightarrow} \mathcal{N} (0,1),
\end{equation}
where $\underline{H}^0_{2n}= (m^0_{i+j})^n_{i,j=0}$ denotes the Hankel determinant of the moments of the arcsine distribution on the interval $[0,1]$,
that is $m_\ell = (\begin{smallmatrix}
                                  2 \ell \\
                                  \ell
 \end{smallmatrix})2^{-2 \ell}$. Moreover, the sequence
 \begin{equation} \label{1.1a}
- \frac {1} {n} \big \{ \log \det \underline{H}_{2n} - \log \det \underline{H}^0_{2n}  \big\}
\end{equation}
satisfies a large deviation principle with a good rate function. It will be demonstrated in Section \ref{sec2} that the moments of the arcsine distribution used for the centering
in \eqref{1.1} and \eqref{1.1a} correspond to the center of the moment space  $\mathcal{M}([0,1])$. \\
Similar results are available for the moment spaces $\mathcal{M}_{2n}([0, \infty))$ and $\mathcal{M}_{2n}(\mathbb{R})$, where the centering has to be performed by the logarithms of the determinants of the Hankel matrices corresponding to the Marcenko-Pastur law and Wigner's semi-circle law, respectively
(in these cases the corresponding Hankel determinants $\underline{H}_{2n} $ have  value $1$).
These measures play a very important role in
the theory of random matrices, free probability and quantum probability, see the books of \cite{hiaipetz2000} and \cite{horaobata2007} among others. \\
The remaining part of this paper is organized as follows. In Section \ref{sec2} we present some  facts on moment theory and introduce random moment sequences on the spaces $\mathcal{M}_n ([0,1]),  \mathcal{M}_n ([0,\infty))$ and $\mathcal{M}_{2n - 1} (\mathbb{R})$.
 We also state some basic properties of these random variables which will be useful in the following discussion. In Section \ref{sec3} it is shown that
for the canonical distributions on the moment space $\mathcal{M}_{2n} (I)$ an  appropriately standardized version of the stochastic process
\be
\{  D_{2\lfloor  nt \rfloor} \}_{t\in [0,1]} =\{   \log \det \underline{H}_{2\lfloor  nt \rfloor} \}_{t\in [0,1]}
\ee
converges weakly to a Gaussian process. The centering and  scaling is different for the three moment spaces under consideration.  
We also study the asymptotic properties of the vector $(D_{n, 2}, \ldots, D_{n, 2k})^t$ for any fixed $k$. 
Large deviation principles are investigated in Section \ref{sec4}, while some technical results which are required for the proofs are provided in the Appendix.

\section{Some basic facts about  moment theory} \label{sec2}
 \def\theequation{2.\arabic{equation}}
 \setcounter{equation}{0}

 Similar to cumulants, canonical moments provide a one-to-one transformation of the ordinary moments. They appear naturally in the continued fraction expansion of the Stieltjes transform of a probability measure but are less known than cumulants. Therefore, we state some basic facts in the following two paragraphs, where we distinguish between bounded and unbounded intervals.

 \subsection{Canonical moments} \label{sec21}

Canonical moments  have been investigated in a series of papers by  \cite{skibinsky1967,skibinsky1968,skibinsky1969} and roughly speaking define a
one-to-one mapping from the set of moments $ \mathcal{M} ([0,1])  $  (or more generally from $\mathcal{M} ([a,b])$  for any finite
 interval $[a,b] \subset \er $) onto the set $[0,1]^\en$.  They have implicitly been discussed before in the work
 of \cite{verblunsky1935,verblunsky1936}, who mainly considered measures on the unit circle. In this section we  briefly present some  basic facts   for the sake of a self contained presentation and discuss corresponding results for the set $\mathcal{M} ([0,\infty))$ and $\mathcal{M}(\mathbb{R})$. For details we refer to
 the  monographs of  \cite{dettstud1997} and \cite{wall1948}.
For a given vector  ${\vec m_{k-1}=}(m_1,\dots ,m_{k-1})^T \in \mathcal{M}_{k-1}([0,1])$  of moments of a probability measure on the interval $[0,1]$ define
\be
\nonumber
m_{k}^- = \min \left \{ m_{k}(\mu) \left| \mu \in \mathcal{P}([0,1]) \mbox{ with } \int_0^1 t^i d\mu(t) = m_i \mbox{ for } i = 1,\dots ,k-1 \right. \right \}, \\
\nonumber m_{k}^+ = \max \left \{ m_{k}(\mu) \left| \mu \in \mathcal{P}([0,1]) \mbox{ with } \int_0^1 t^i d\mu(t) = m_i \mbox{ for } i
= 1,\dots ,k-1 \right. \right \}.
 \ee
 Throughout this paper let $\inte C$ denote the interior of a set $C$.
 It is shown in \cite{dettstud1997}
  that $ \vec{m}_k = (m_1,\dots ,m_{k})^T \in \inte \mathcal{M}_{k}([0,1]) $ if and only if
$ m_{k}^- <m_k < m_{k}^+$. In this case    the  canonical moments of order $l=1,\dots,k$ are defined  as
\be \label{canmom} p_l =p_l(\vec{m}_k) = \frac{m_{l}-m_{l}^-}{m_l^+-m_l^-} \ ; \qquad l=1,\dots,k. \ee
 Note that for ${\vec m_k}\in \inte \mathcal{M}_k ([0,1])$ we have $p_l\in(0,1); \  l=1,\dots,k$; and that $p_k$ describes
  the relative position
 of the moment $m_k$ in the set of all possible $k$-th moments with fixed moments $m_1, \dots,m_{k-1}$. It can also be shown
that  the definition \eqref{canmom} defines a one-to one mapping from $\inte \mathcal{M}_n ([0,1])$
 onto the open cube $(0,1)^n$.  As an example consider the arcsine distribution  $\mu^0$  on the interval $[0,1]$ with density $1/(\pi \sqrt{x(1-x)})$, then
the corresponding canonical moments are given by $p_\ell=1/2$ for all $\ell \in \en$ [see \cite{dettstud1997}]. Consequently, the sequence of
 moments of the arcsine distribution defines the center of the moment  space $ \mathcal{M}([0,1]) $. Note however, that it is not the barycenter of the moment space.\\
The determinant of the Hankel matrix $\underline{H}_{2k} =(m_{i+j})_{i,j=0}^k$
 of the moment vector $(m_1,\ldots , m_{2k})$ can easily   be
 expressed in terms of the corresponding canonical moments, that is
 \begin{eqnarray}\label{detbound}
 \det \underline{H}_{2k}  =  \det (m_{i+j})_{i,j=0}^k =  \big( p_1q_1p_2 \big)^k \prod_{j=2}^k  \big( q_{2j-2}  p_{2j-1}q_{2j-1}p_{2j} \big)^{k-j+1}~,
\end{eqnarray}
where  $q_j=1-p_j$ [see  \cite{dettstud1997}, Theorem 1.4.10]. \\
In the case $I=[0,\infty)$
the upper bound $m_k^+$  is in general not finite, but we can still define for a point ${\vec m_{k-1}} \in \inte \mathcal{M}_{k-1} ([0,\infty))$ the lower bound
\bea
 \label{defm-2}
m_{k}^- = \min \left \{ m_{k}(\mu) \left| \mu \in \mathcal{P}([0,\infty)) \mbox{ with } \int_0^{\infty} t^i d\mu(t) = m_i \mbox{ for } i = 1,\dots
,k-1 \right. \right \}, \eea where ${\vec m_k}=(m_1,\dots ,m_{k})^T \in \inte \mathcal{M}_{k}([0,\infty)) $ if and only if $  m_{k} > m_{k}^-$ . In
this case,  the analogues of the canonical moments are defined by the quantities \be\label{defz} z_l &= &
\frac{m_{l}-m_{l}^-}{m_{l-1}-m_{l-1}^-} \qquad l=1,\dots,k \ee (with $m_0^-=0$).
As in the case of a bounded interval  the definition  \eqref{defz} provides a one to one mapping
from $  \inte \mathcal{M}_n([0,\infty)) $  onto $(\er^+)^n$, and it can be shown using similar arguments as in \cite{dettstud1997} that 
$$
\det \underline{H}_{k} = (m_{k} - m_{k}^-)   \det \underline{H}_{k - 2}~,~k \ge 2.
$$
Consequently, the determinant of the Hankel matrix is given by
\begin{eqnarray}\label{detunbound}
 \det \underline{H}_{2k}  =  \det (m_{i+j})_{i,j=0}^k =  \prod_{j=1}^k  \big( z_{2k-1}z_{2k} \big)^{k-j+1}~,
\end{eqnarray}
Finally, in the case $I = \mathbb{R}$ neither $m_{k}^- $
nor $m_{k}^+$ are in general finite. Nevertheless, there exists an analogue of the quantities $p_i$ and $z_i$ defined in \eqref{detbound} and \eqref{detunbound}.
To be precise, we define for a vector $ {\vec m_{2n-1}}  = (m_1,\dots,m_{2n-1})   \in  \mathcal{M}_{2n-1}(\er)$ 
with $\underline{H}_{2n-2} >0$  the polynomial
\be \label{monpol} P_{k} (x) ~=~ \left|\begin{array}{cccc}
  m_0& \cdots& m_{k-1}& 1\\
  \vdots& \ddots & \vdots& \vdots\\
  m_k& \cdots& m_{2k-1}& x^k
  \end{array}\right| \Big/
\left|\begin{array}{ccc}
  m_0& \cdots& m_{k-1}\\
  \vdots&  \ddots& \vdots\\
  m_ {k-1}& \cdots& m_{2k-2}
  \end{array}\right| \ ; \quad k = 1,  \dots, n
\ee
 [see  \cite{chihara1978}]. We consider  a one to one mapping
 \be \label{defxi} \xi_n : \left\{ \begin{array}{l}
                        \inte \mathcal{M}_{2n-1}(\er) \longrightarrow \left( \er \times \er^+ \right)^{n-1} \times \er  \\
                         {\vec m_{2n-1}} \mapsto (b_1,a_1,\dots ,a_{n-1},b_n)^T  \end{array} \right.~
\ee
 defined by
 \be
\int_\mathbb{R} x^k P_k(x) d\mu(x) &=&  a_1\dots a_k;  \qquad k=1,\dots,n-1  \label{wall1},  \\
\int_\mathbb{R} x^{k+1} P_k(x) d\mu(x) &=&  a_1\dots a_k (b_1 + \dots +b_{k+1}); \qquad k=0, \dots,n-1 \label{wall2} ,
 \ee
 where $\mu $ is any measure with first $2n-1$ moments given by  $ (m_1,\dots,m_{2n-1}) $ [see for example  \cite{wall1948}]. Note that $P_1(x),\dots, P_n(x)$ are orthogonal polynomials with leading coefficient $1$ with respect to the measure $\mu$.
 It is now easy to see that the determinant of the Hankel matrix can be represented as
 \begin{eqnarray}\label{detunbound2}
 \det \underline{H}_{2k}  =  \det (m_{i+j})_{i,j=0}^k =  \prod_{j=1}^k    a_j^{k-j+1}~.
\end{eqnarray}
In the following section we will equip these moment spaces with distributions. We begin with the moment space
corresponding to measures on bounded intervals.

\subsection{Distributions on moment spaces} \label{sec22}

\cite{chakemstu1993} considered
 a uniformly distributed vector on the set $\mathcal{M}_n ([0,1])$ and  showed that  an appropriately
standardized version of a projection $\Pi_k ({\vec m_n})$
onto  its first $k$ components is asymptotically normal distributed, where the centering has to be performed with the moments of the arcsine distribution. A key ingredient in their proof is the following lemma, which shows
  that the canonical moments of  a  uniformly distributed vector ${\vec m_n}$ on $\mathcal{M}_n([0,1])$ are independent
[for a proof  see \cite{dettstud1997}]. For this 
and the following statements we will make the dependence of the canonical moments
on the dimension of the moment space $\mathcal{M}_n ([0,1])$ more explicit. More precisely, we use the notation $p_{n,\ell} (\vec{m}_n) $ instead of $p_\ell (\vec{m}_n) $,
and  the symbol $\beta (a,b) $ denotes a Beta-distribution
on the interval $[0,1]$ with  density 
$$  I_{[0,1]}(x) x^{a - 1} (1 - x)^{b - 1} /  {B(a, b)} .$$
\begin{lem} \label{betadist}
For a  uniformly distributed random vector ${\vec m_n}$ on the $n$th moment space 
$\mathcal{M}_n ([0,1])$  the canonical moments $p_{n,1}({\vec m_n}), \ldots , p_{n,n}({\vec m_n})$
defined by \eqref{canmom} are independent
and Beta-distributed, that is
$$
p_{n,i} ({\vec m_n} ) \sim \beta (n-i+1,n-i+1)~.
$$
\end{lem}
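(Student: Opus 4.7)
The plan is to compute the joint density of $(p_{n,1},\ldots,p_{n,n})$ under the uniform law on $\mathcal{M}_n([0,1])$ by change of variables and to recognise it as a product of Beta densities. The canonical-moment map $\phi_n:(0,1)^n\to \inte\mathcal{M}_n([0,1])$, $\vec p\mapsto \vec m$ is a smooth bijection and $\partial\mathcal{M}_n([0,1])$ has Lebesgue measure zero, so the change-of-variables formula applies. Since $m_l=m_l^-(\vec p)+p_l\bigl(m_l^+(\vec p)-m_l^-(\vec p)\bigr)$ with $m_l^\pm$ depending only on $p_1,\ldots,p_{l-1}$, the Jacobian matrix $D\phi_n(\vec p)$ is lower triangular with diagonal entries $\partial m_l/\partial p_l=m_l^+-m_l^-$. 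Hence the density of $(p_{n,1},\ldots,p_{n,n})$ on $(0,1)^n$ is proportional to $\prod_{l=1}^n(m_l^+-m_l^-)$.

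The main step is to establish the product formula
\[
 m_l^+ - m_l^- \;=\; \prod_{j=1}^{l-1} p_j q_j, \qquad l=1,\ldots,n,
\]
which can be found in \cite{dettstud1997}. For even $l=2k$ it follows from \eqref{detbound} together with the identity $\det \underline{H}_{2k}=(m_{2k}-m_{2k}^-)\det\underline{H}_{2k-2}$ (obtained by expanding $\det\underline{H}_{2k}$ along its last column and noting that it vanishes at $m_{2k}=m_{2k}^-$): dividing \eqref{detbound} for index $k$ by the same identity for index $k-1$ yields $p_{2k}\prod_{j=1}^{2k-1}p_j q_j$, and the relation $m_{2k}-m_{2k}^-=p_{2k}(m_{2k}^+-m_{2k}^-)$ then gives the claim. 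The odd case $l=2k+1$ is treated in an entirely analogous way, using the shifted Hankel determinant $\det(m_{i+j+1})_{i,j=0}^k$ in place of $\det\underline{H}_{2k}$.

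Plugging this identity into the Jacobian and interchanging the order of multiplication,
\[
 \prod_{l=1}^n(m_l^+-m_l^-)\;=\;\prod_{l=1}^n\prod_{j=1}^{l-1}p_j q_j\;=\;\prod_{j=1}^{n-1}(p_j q_j)^{n-j},
\]
so the joint density of $(p_{n,1},\ldots,p_{n,n})$ factorises across coordinates on $(0,1)^n$, the $j$-th factor being $(p_j q_j)^{n-j}$. Up to normalisation this is the density of $\beta(n-j+1,n-j+1)$ (with the $j=n$ factor giving $\beta(1,1)$, i.e.\ uniform), which yields both independence and the stated marginal distributions; as a byproduct the normalising constant is $\prod_{j=1}^n B(n-j+1,n-j+1)^{-1}$, and hence $\mathrm{vol}(\mathcal{M}_n([0,1]))=\prod_{j=1}^n B(n-j+1,n-j+1)$.

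The only non-routine step is verifying the product formula for $m_l^+-m_l^-$, in particular handling both parities of $l$ uniformly; once that identity is in place the rest is a clean change-of-variables computation.
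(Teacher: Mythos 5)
The paper does not prove this lemma itself; it simply refers the reader to Dette and Studden (1997), so there is no in-paper proof to compare against. Your argument — push the uniform density on $\mathcal{M}_n([0,1])$ forward through the lower-triangular map $\vec p \mapsto \vec m$, read off the diagonal Jacobian $\prod_{l=1}^n (m_l^+-m_l^-)$, and invoke Skibinsky's range identity $m_l^+-m_l^- = \prod_{j=1}^{l-1} p_j q_j$ to obtain the factorized density $\prod_{j}(p_jq_j)^{n-j}$ — is precisely the standard proof in that reference and is correct in all details, including the exponent count and the identification with $\beta(n-j+1,n-j+1)$ marginals. One small logical remark: you derive the range identity from the Hankel-determinant formula \eqref{detbound} together with the recursion $\det\underline{H}_{2k}=(m_{2k}-m_{2k}^-)\det\underline{H}_{2k-2}$, whereas in Dette and Studden the implication runs in the opposite direction (\eqref{detbound} is proved from the range identity); both directions are legitimate, but if you insist on the determinant route you also need the analogous closed form for the odd-order Hankel determinant $\det(m_{i+j+1})_{i,j=0}^{k}$ in terms of the $p_j$, which is not recorded in this paper and would need to be imported from the monograph alongside \eqref{detbound}.
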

Note that the mapping between the (regular) moments and the canonical moments has only been defined on the 
the interior   of $\mathcal{M}_n([0, 1])$. However, $\mathcal{M}_n([0, 1])$ is a closed, convex set and therefore its 
boundary has Lebesgue measure $0$. Since we endow this space with the uniform distribution, the random variables $p_{n, i}$ are a.s. well-defined.
We also note that  \cite{detnag2012} defined  more general  distributions on $\mathcal{M}_n([0, 1])$, which contain the uniform distribution as
a special case.
\\
In order to define  an analogue of the uniform distribution on the unbounded moment space   $\mathcal{M}_n ([0,\infty))$ these authors
use the relation \eqref{defz}. To be precise, consider a random vector $\vec{m}_n$ and denote the quantities in \eqref{defz}
by $z_{n,1}(\vec{m}_n), \ldots , z_{n,n}(\vec{m}_n)$. A density on the moment space $\mathcal{M}_n ([0,\infty))$ is then defined by
 \be\label{momdichte0infty}
g_n^{(\gamma,\delta)}(\vec{m}_n)
                                                               &=& c_n^{[0,\infty )} \prod_{k=1}^n z_{n,k}(\vec{m}_n)^{\gamma_{n,k}} \exp (-\delta_{n,k} z_{n,k}(\vec{m}_n)) \mathbbm{1}_{ \{ z_{n,k}(\vec{m}_n) >0 \} },
\ee
where the constants satisfy    $\gamma_{n,k} > -(n-k+1)$, 
$\delta_{n,k} >0 $ for $k=1, \ldots , n$,   and the
normalizing constant is given by
$c_n^{[0,\infty)} = \prod_{k=1}^n ( {\delta_{n,k}^{\gamma_k +n-k+1}}) /{\Gamma(\gamma_{n,k} +n-k+1)} ). $ The analogue of Lemma \ref{betadist}  is now
provided  by the following result, where the symbol $\gamma (a,b) $ denotes a Gamma distribution ($a,b>0$)  with density
$$
{b^a \over \Gamma (a) } x^{a-1} e^{-bx} \mathbbm{1}_{[0,\infty)} (x) ~.
$$
\begin{lem} \label{gammadist}
For a    random vector ${\vec m_n}$  with density \eqref{momdichte0infty} on  $\mathcal{M}_n ([0,\infty))$
 the canonical moments $z_{n,k}(\vec{m}_n)$
defined by  \eqref{defz} are independent and Gamma-distributed, that is
$$
z_{n,k}(\vec{m}_n) \sim \gamma(\gamma_{n,k}+n-k+1,\delta_{n,k}),\qquad k=1,\dots,n.
 $$
\end{lem}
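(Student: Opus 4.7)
The plan is to prove this by direct change of variables, passing from the density $g_n^{(\gamma,\delta)}$ on the moment space $\mathcal{M}_n([0,\infty))$ to the induced density on $(z_{n,1},\ldots,z_{n,n})$ and then recognizing the result as a product of independent Gamma densities.

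First I would use the fact already stated in the excerpt that $(m_1,\ldots,m_n)\mapsto(z_{n,1},\ldots,z_{n,n})$ is a bijection from $\inte\mathcal{M}_n([0,\infty))$ onto $(\er^+)^n$. Since the definition \eqref{defz} shows that $z_{n,k}$ depends only on $m_1,\ldots,m_k$, the Jacobian matrix $(\partial z_{n,k}/\partial m_j)$ is lower triangular, so its determinant is just the product of diagonal entries $\partial z_{n,k}/\partial m_k=(m_{k-1}-m_{k-1}^-)^{-1}$. Iterating the defining recursion $m_k-m_k^-=z_{n,k}(m_{k-1}-m_{k-1}^-)$ from $m_0=1,\,m_0^-=0$ yields $m_{k-1}-m_{k-1}^-=\prod_{\ell=1}^{k-1}z_{n,\ell}$, and therefore
$$
\left|\det\frac{\partial(z_{n,1},\ldots,z_{n,n})}{\partial(m_1,\ldots,m_n)}\right|=\prod_{k=1}^n(m_{k-1}-m_{k-1}^-)^{-1}=\prod_{\ell=1}^{n-1}z_{n,\ell}^{-(n-\ell)}.
$$

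Next I would apply the change-of-variables formula to the density \eqref{momdichte0infty}. The extra Jacobian factor $\prod_{\ell=1}^{n-1}z_{n,\ell}^{n-\ell}$ multiplies the $z_k^{\gamma_{n,k}}$ factors already present: for $k<n$ this produces $z_k^{\gamma_{n,k}+n-k}$, and for $k=n$ the empty extra factor gives $z_n^{\gamma_{n,n}}=z_n^{\gamma_{n,n}+n-n}$, so in every case the exponent becomes $\gamma_{n,k}+n-k$. The joint density of $(z_{n,1},\ldots,z_{n,n})$ is thus
$$
c_n^{[0,\infty)}\prod_{k=1}^n z_k^{\gamma_{n,k}+n-k}\exp(-\delta_{n,k}z_k)\,\mathbbm{1}_{\{z_k>0\}},
$$
which factorizes over $k$. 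Recognizing each factor as $\gamma(\gamma_{n,k}+n-k+1,\delta_{n,k})$ and matching the stated normalizing constant $c_n^{[0,\infty)}=\prod_k\delta_{n,k}^{\gamma_{n,k}+n-k+1}/\Gamma(\gamma_{n,k}+n-k+1)$ gives simultaneously independence, the claimed marginal distributions, and the normalization consistency check (which justifies calling $g_n^{(\gamma,\delta)}$ a density in the first place).

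The only delicate point is the bookkeeping in the Jacobian, in particular verifying that the extra power $n-k$ picked up by $z_k$ is exactly what is needed to shift the Gamma shape parameter from $\gamma_{n,k}$ to $\gamma_{n,k}+n-k+1$; this is what gives the lemma its content. One could alternatively avoid this bookkeeping by observing that the density $g_n^{(\gamma,\delta)}$ is introduced precisely so that the induced law of the canonical moments factorizes, but writing out the Jacobian explicitly as above seems cleanest and mirrors the argument used for Lemma \ref{betadist} on the bounded interval.
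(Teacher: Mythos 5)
Your change-of-variables argument is correct: the lower-triangular structure of the Jacobian, the telescoping identity $m_{k-1}-m_{k-1}^- = \prod_{\ell=1}^{k-1} z_{n,\ell}$, the resulting factor $\prod_{\ell=1}^{n-1} z_{\ell}^{\,n-\ell}$, and the matching of $c_n^{[0,\infty)}$ with the product of Gamma normalizations are all in order. The paper itself does not prove this lemma but cites \cite{detnag2012} for it; the computation you give is the standard one underlying that reference and parallels the proof of Lemma~\ref{betadist} for the bounded case, so your route is exactly the intended one.
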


\smallskip \smallskip

A proof of Lemma \ref{gammadist} can be found in \cite{detnag2012} and we
 conclude this section with the corresponding statements for the moment space $\mathcal{M}_{2n - 1}(\mathbb{R})$.
Following \cite{detnag2012} we define 
a distribution on  $\mathcal{M}_{2n-1} (\mathbb{R})$ by  
\begin{align}  \nonumber
h_{2n-1}^{(\gamma  ,\delta )}(\vec{m}_{2n-1}) =& \prod_{k=1}^n \sqrt{\frac{\delta_{n,2k-1}}{\pi}} \exp \left( -\delta_{n,2k-1} b_{n,k}^2 (\vec{m}_{2n-1}) \right) \\
&                                       \times \prod_{k=1}^{n-1} \frac{{\delta_{n,2k}}^{\gamma_{n,k} +2n-2k}}{\Gamma (\gamma_{n,k} +2n-2k)} a_{n,k}^{\gamma_{n,k}}
(\vec{m}_{2n-1}) \exp \left( -\delta_{n,2k} a_{n,k} (\vec{m}_{2n-1}) \right) \mathbbm{1}_{ \{ a_{n,k} (\vec{m}_{2n-1}) >0 \} } ,
  \label{momdichteer} 
  \end{align}
  where the  constants  satisfy  $\gamma_{n,k} > -2(n-k)$  for  $k=1,\ldots ,n-1$ and  $\ \delta_{n,1}, \dots, \delta_{1,2n-1}> 0$.  
The  distribution of the corresponding quantities $a_k$ and $b_k$ defined by \eqref{wall1} and \eqref{wall2} is specified in the following
result.

\begin{lem} \label{gammadist2}
Let $\vec{m}_{2n-1} \in \mathcal{M}_{2n-1}(\er)$ be a random vector with density $h_{2n-1}^{(\gamma,\delta)}$ defined in
\eqref{momdichteer}.
Then the random  coefficients  $(b_{n,1},a_{n,1}\dots ,a_{n,n-1},b_{n,n})^T $ defined by \eqref{wall1} and \eqref{wall2}   are
independent and
\begin{align*}
b_{n,k} &\sim \mathcal{N} (0,\tfrac{1}{2\delta_{n,2k-1}}), ~a_{n,k} \sim \gamma (\gamma_{n,k} +2n-2k,\delta_{n,2k}).
\end{align*}
\end{lem}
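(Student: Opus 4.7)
The plan is to prove Lemma \ref{gammadist2} by a direct application of the multivariate change-of-variables formula to the bijection $\xi_n$ defined in \eqref{defxi}. Since $h_{2n-1}^{(\gamma,\delta)}(\vec{m}_{2n-1})$ is the density of $\vec{m}_{2n-1}$ with respect to Lebesgue measure on the open set $\inte \mathcal{M}_{2n-1}(\er)$, the density of the transformed random vector $\xi_n(\vec{m}_{2n-1})=(b_{n,1},a_{n,1},\ldots,a_{n,n-1},b_{n,n})^T$ on $(\er\times\er^+)^{n-1}\times\er$ equals
\[
h_{2n-1}^{(\gamma,\delta)}\bigl(\xi_n^{-1}(b,a)\bigr)\cdot \bigl|\det D\xi_n^{-1}(b,a)\bigr|.
\]
The argument therefore splits into three parts: first, verify that $\xi_n$ is a $C^1$-diffeomorphism from $\inte \mathcal{M}_{2n-1}(\er)$ onto $(\er\times\er^+)^{n-1}\times\er$ (this follows from the standard existence/uniqueness theory for orthogonal polynomials and the Jacobi/Favard correspondence, since the Hankel matrix $\underline H_{2n-2}$ is positive definite throughout the interior); second, compute the Jacobian determinant; third, read off the factorization into independent Gaussian and Gamma pieces.

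The key calculation is the Jacobian. The idea is to exploit the three-term recurrence $P_{k}(x)=(x-b_{n,k})P_{k-1}(x)-a_{n,k-1}P_{k-2}(x)$ for the monic orthogonal polynomials in \eqref{monpol}. Expressing the moments $m_j$ as polynomials in $(b_{n,1},a_{n,1},\ldots,b_{n,n})$ via the relations \eqref{wall1} and \eqref{wall2} yields a triangular dependence: each new pair $(a_{n,k},b_{n,k+1})$ enters only the moments of order $\ge 2k$. A direct bookkeeping on the triangular Jacobian matrix (or a short induction, as in the analogous computations in \cite{detnag2012} for the bounded and half-line cases) gives
\[
\bigl|\det D\xi_n^{-1}(b,a)\bigr|=\prod_{k=1}^{n-1}a_{n,k}^{2(n-k)-1}.
\]
Once this exponent is established, the result is immediate: multiplying the factor $a_{n,k}^{\gamma_{n,k}}$ in \eqref{momdichteer} by $a_{n,k}^{2(n-k)-1}$ produces $a_{n,k}^{\gamma_{n,k}+2n-2k-1}$, which together with $\exp(-\delta_{n,2k}a_{n,k})$ and the stated normalizing constant is exactly the $\gamma(\gamma_{n,k}+2n-2k,\delta_{n,2k})$ density, while the factors $\sqrt{\delta_{n,2k-1}/\pi}\exp(-\delta_{n,2k-1}b_{n,k}^2)$ are the $\mathcal{N}(0,1/(2\delta_{n,2k-1}))$ densities. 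Since the resulting joint density is a product over $k$, the coordinates are independent.

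The main obstacle is the Jacobian computation: the map $\xi_n$ is nonlinear and globally defined through a ratio of determinants in \eqref{monpol}, so a brute-force differentiation is unwieldy. The practical way around this is to parametrize via the Jacobi matrix and use the triangularity induced by the orthogonality relations, which reduces the determinant to the product of diagonal entries and yields the explicit power of $a_{n,k}$ quoted above. The other technical point, namely that the boundary of $\mathcal{M}_{2n-1}(\er)$ is a Lebesgue-null set so that the change of variables is valid almost everywhere, is handled exactly as in the remark following Lemma \ref{betadist}.
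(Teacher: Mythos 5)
Your proposal is correct and takes the expected route. The paper does not reproduce a proof of Lemma~\ref{gammadist2} (it defers to \cite{detnag2012}, where the analogous Lemma~\ref{gammadist} is proved), but the argument there is exactly the change-of-variables one you outline: the map $\xi_n^{-1}$ is triangular in the sense that $m_{2k}$ is the first moment to involve $a_{n,k}$ (with $\partial m_{2k}/\partial a_{n,k}=a_{n,1}\cdots a_{n,k-1}$) and $m_{2k+1}$ the first to involve $b_{n,k+1}$ (with $\partial m_{2k+1}/\partial b_{n,k+1}=a_{n,1}\cdots a_{n,k}$), so the Jacobian determinant is the product of these diagonal entries, which indeed collapses to $\prod_{k=1}^{n-1}a_{n,k}^{2(n-k)-1}$; multiplying this by the factor $a_{n,k}^{\gamma_{n,k}}$ in \eqref{momdichteer} gives the Gamma exponent $\gamma_{n,k}+2n-2k-1$, and the product structure yields independence exactly as you say.
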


\medskip

\begin{rem} \label{random} {\rm There exists an interesting relation to random matrix theory in particular to the $\beta$-ensembles considered by
\cite{dumede2002,edesut2008,ramridvir2011} among others. To be precise, consider  exemplarily the moment space  $ \mathcal{M}_{2n-1}(\er)$.
It can be shown that for a point $\vec{m}_{2n-1} \in  \inte \mathcal{M}_{2n-1} (\er )$  the polynomials defined in \eqref{monpol}  satisfy the  three term
recurrence relation
\be
\label{polrek} x P_k(x) = P_{k+1}(x) + b_{k+1} P_k(x) + a_k P_{k-1}(x); \qquad k=1,\dots,n-1, 
\ee 
($P_0(x) = 1,P_1(x) = x-b_1$), where the coefficients in the recursion are defined by \eqref{wall1} and \eqref{wall2}.  A straightforward
calculation now shows that the polynomial $P_n(x)$ is the characteristic polynomial  $\det (xI_n -A_n)$ of the matrix  
\begin{equation} \label{dum}
A_n ~=~
\begin{pmatrix}
b_{1} &   \sqrt{a_{1}} \\
  \sqrt{a_{1}} &  {b_{2}} &   \sqrt{a_{2}} \\
 & \ddots & \ddots & \ddots \\
& & \sqrt{a_{n-2}} &   {b_{n-1}} &  \sqrt{a_{n-1}} \\
& & &   \sqrt{a_{n-1}} &  {b_{n}} 
\end{pmatrix}~.
\end{equation} 
If $\vec{m}_{2n-1}$ 
is a random vector on  $ \mathcal{M}_{2n-1}(\er)$ with density  $h_{2n-1}^{(\gamma,\delta)}$ defined in \eqref{momdichteer}, 
and $\delta_{n,2k-1} = 1/2$ ($k=1,\ldots , n$),  $\delta_{n,2k} = 1$, $\gamma_{n,k} = ({1\over 2} \beta -2) (n-k) $   ($k=1,\ldots , n-1$) for some $\beta >0$,
then  it follows from Lemma \ref{gammadist2}
that   the coefficients  in this matrix  are independent with distributions $b_i \sim {\cal N} (0, 1)$, $a_i \sim {1\over 2}  \chi^2_{\beta (n-i)}$. This means that 
$P_n(x) $ is the characteristic polynomial of the random the matrix \eqref{dum} corresponding to the $\beta$- Hermite ensemble as introduced by
\cite{dumede2002}. While the common matrix literature investigates spectral properties  of this matrix, the random Hankel determinant
corresponds to a product of $L^2$-norms  of the (random) polynomials $P_1, \ldots , P_n$, that is
$$
\prod_{i=1}^n \int_\er P_i^2(x) \mu (dx) ~= ~\prod_{i=1}^n a_i^{n-i+1}  ,
$$
where  $\mu$ denotes a random measure whose first $2n-1$ moments are defined the random Jacobi matrix \eqref{dum}. \\
We also note that a similar interpretation is available for the random moment sequences on   $ \mathcal{M}_{2n-1}([0,1])$
and $ \mathcal{M}_{2n-1}([0,\infty) )$ observing the results of  \cite{kilnen2004} and \cite{dumede2002}, respectively.
}
\end{rem}

\section{Weak convergence of Hankel determinant processes} \label{sec3}
 \def\theequation{3.\arabic{equation}}
 \setcounter{equation}{0}

 Throughout this section we investigate the asymptotic properties of the stochastic process
 \be \label{hanproc}
 \{  D_{n,2\lfloor  nt \rfloor}   (\vec{m}_{2n}) \}_{t\in [0,1]} =\{   \log \det \underline{H}_{n,2\lfloor  nt \rfloor} (\vec{m}_{2n})  \}_{t\in [0,1]}
 \ee
 where $\underline{H}_{n,2\lfloor  nt \rfloor} (\vec{m}_{2n}) = (m_{i+j} (\vec{m}_{2n}) )_{i,j=0}^{\lfloor nt \rfloor}$
 is the Hankel determinant of a random vector  $ \vec{m}_{2n}$ on the moment space $\mathcal{M}_{2n}(I)$. We
 also investigate the asymptotic  properties of the vector $(D_{n,2}   (\vec{m}_{2n}) , \ldots, D_{n,2k}   (\vec{m}_{2n}))$ for some fixed $k \in \en $.
 In the following discussion we  
 treat the cases of a bounded and unbounded moment space separately. \\
In the following discussion the symbol $Y_n  \xrightarrow{d}  Y$ denotes weak convergence of a vector valued sequence 
 of random variables 
 $(Y_n)_{n\in \en} $. Moreover,  let
    \begin{align*}
        l^\infty([0, 1]) = \{f:[0, 1] \to \mathbb{R} \mid \|f\|_\infty < \infty\}
    \end{align*}
  denote the space of bounded real-valued functions on the interval  $[0, 1]$, with the topology induced by the uniform norm $\| \cdot \|_\infty$. We
  denote by $  X_n \Longrightarrow X$ the weak convergence  of a sequence $(X_n)_{n\in \en} $ of  random variables
   in  $l^\infty([0, 1])$  to an $l^\infty([0, 1])$-valued random variable $X$. This is the mode of convergence described in Chapter 1.5 of \cite{vaartwell1995}. We also use the convention $0 \log (0) := 0 $ and denote by $ s\wedge t$ resp. $ s \vee t$ the minimum resp. maximum of $s,t \in \mathbb{R}$.

 \subsection{Hankel determinants from $\mathcal{M}_{2n} ([0,1])$} \label{sec31}
 Consider a uniformly distributed  random vector $\vec{m}_{2n}$ on  $\mathcal{M}_{2n} ([0,1])$, that is 
 $\vec{m}_{2n} \sim \mathcal{U}(\mathcal{M}_{2n})$. 
  We first investigate  the weak convergence of the vector
  \begin{align}\label{hankel_fest}
        H_{n, k} = (D_{n, 2}(\vec{m}_{2n}), \ldots, D_{n, 2k}(\vec{m}_{2n}))^T
    \end{align}
    for a fixed $k \in \en$.
    \begin{thm} \label{thmfixk} 
        If $\vec{m}_{2n} \sim \mathcal{U}(\mathcal{M}_{2n})$, then the random vector
        \begin{align*}
         \sqrt{4n}(    H_{n, k} - H_{n, k}^0) \xrightarrow{d} \mathcal{N}(0, \Sigma_k)~,
                   \end{align*}
        where $H_{n, k}^0 = (D_{n, 2}^0, \ldots, D_{n, 2k}^0)^T$ and $D_{n,2k} ^0 $ denotes the
log-determinant  of  the Hankel  matrix corresponding to  the arcsine distribution, that is
  \begin{align} \label{arcdet}
D_{n,2k}^0 = \log \det \Big({  2 (i+j)\choose i+j   }      2^{-2 (i+j)}  \Big)_{i,j=0,\ldots , k} 
~=~ - { k (2 k  +1)}  \log (2) ,
  \end{align}
 and    the asymptotic covariance matrix is given by
        \begin{align} \label{sigma}
            \Sigma_k = (i \wedge j)_{i, j = 1}^k~.
        \end{align}
    \end{thm}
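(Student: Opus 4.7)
The plan is to reduce the theorem to a finite-dimensional CLT via the canonical moments representation \eqref{detbound} together with Lemma \ref{betadist}. Throughout I write $q_{n, i} = 1 - p_{n, i}$ and set $\alpha_{n, i} = 2n - i + 1$, so that by Lemma \ref{betadist} the variables $p_{n, 1}, \ldots, p_{n, 2n}$ are independent with $p_{n, i} \sim \beta(\alpha_{n, i}, \alpha_{n, i})$. The first step is to isolate the variance-dominant part of $D_{n, 2k}$: a careful inspection of \eqref{detbound} shows that $\log p_{n, l}$ and $\log q_{n, l}$ carry equal coefficients for every odd $l$, while for each even $l = 2j \in \{2, \ldots, 2k\}$ the coefficient of $\log p_{n, 2j}$ exceeds that of $\log q_{n, 2j}$ by exactly one. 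Grouping the paired terms, and noting that $D_{n, 2k}^0 = -k(2k + 1) \log 2$ is obtained by substituting $p_{n, i} \equiv 1/2$ into \eqref{detbound}, I obtain the decomposition
\begin{equation*}
D_{n, 2k} - D_{n, 2k}^0 \;=\; \sum_{j = 1}^{k} \log(2 p_{n, 2j}) \;+\; R_{n, k},
\end{equation*}
where $R_{n, k}$ is a linear combination, with integer coefficients of absolute value at most $k$, of the symmetric log-products $\log(4 p_{n, l} q_{n, l})$ for $l = 1, \ldots, 2k - 1$.

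The second step controls the remainder. Writing $\epsilon = p - 1/2$, the Taylor expansions $\log(2 p) = 2 \epsilon - 2 \epsilon^{2} + O(\epsilon^{3})$ and $\log(4 p q) = \log(1 - 4 \epsilon^{2}) = -4 \epsilon^{2} + O(\epsilon^{4})$ reveal that the linear term in the second expansion vanishes, reflecting the symmetry of $\beta(\alpha, \alpha)$ around $1/2$. Combined with the digamma/trigamma asymptotics $\psi(\alpha) - \psi(2\alpha) = -\log 2 - 1/(4\alpha) + O(\alpha^{-2})$ and $\psi'(\alpha) - \psi'(2\alpha) \sim 1/(2\alpha)$ as $\alpha \to \infty$, this yields $E[\log(4 p_{n, i} q_{n, i})] = O(n^{-1})$ and $\mathrm{Var}(\log(4 p_{n, i} q_{n, i})) = O(n^{-2})$ uniformly in $i \leq 2k$. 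Since $k$ is fixed, Chebyshev's inequality then gives $\sqrt{4 n}\, R_{n, k} \xrightarrow{P} 0$.

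The third step is a finite-dimensional CLT for the main term. The asymptotic normality of $\beta(\alpha, \alpha)$ as $\alpha \to \infty$ produces $\sqrt{4 n}(p_{n, 2j} - 1/2) \xrightarrow{d} \mathcal{N}(0, 1/4)$, and the delta method with $g(x) = \log(2x)$, noting $g(1/2) = 0$ and $g'(1/2) = 2$, yields $\sqrt{4 n}\, \log(2 p_{n, 2j}) \xrightarrow{d} \mathcal{N}(0, 1)$ for each fixed $j$. By Lemma \ref{betadist} the family $\{p_{n, 2j}\}_{j = 1}^{k}$ is independent, so jointly the limit is $(Z_1, \ldots, Z_k)^{T} \sim \mathcal{N}(0, I_{k})$. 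Writing the decomposition in vector form,
\begin{equation*}
\sqrt{4 n}\, (H_{n, k} - H_{n, k}^0) \;=\; A_k \cdot \sqrt{4 n}\, \bigl(\log(2 p_{n, 2 j})\bigr)_{j = 1}^{k} \;+\; \sqrt{4 n}\, \bigl(R_{n, k'}\bigr)_{k' = 1}^{k},
\end{equation*}
where $A_k$ is the lower-triangular $k \times k$ matrix with ones on and below the diagonal, Slutsky's theorem and the identity $A_k A_k^{T} = (i \wedge j)_{i, j = 1}^{k} = \Sigma_k$ complete the proof. The main obstacle is the algebraic bookkeeping in the first step---recognising that the sole source of first-order fluctuations is the asymmetric pair $(\log p_{n, 2j}, \log q_{n, 2j})$ for even indices $2j$; once this separation is in place, both the remainder estimate (which crucially relies on the $O(n^{-2})$ variance of the symmetric combinations) and the Gaussian limit of the leading sum are routine.
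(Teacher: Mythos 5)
Your proof is correct and follows essentially the same route as the paper: both use Lemma \ref{betadist} to obtain independent Beta-distributed canonical moments, split the log-determinant via \eqref{detbound} into a symmetric part $\log(4p_lq_l)$ (whose linear fluctuation vanishes by symmetry of $\beta(\alpha,\alpha)$ about $1/2$, so it dies under $\sqrt{4n}$ scaling) and a leading asymmetric part driven by the even-indexed canonical moments, and then apply Lemma \ref{conv_beta} with the delta method plus independence to get the Gaussian limit. The only cosmetic difference is that you carry $\log(2p_{n,2j})$ as the leading term where the paper carries $\log(2q_{2n,2j})$ (equivalent by the $p \leftrightarrow q$ symmetry of the Beta law), and you spell out the $A_kA_k^T = (i\wedge j)$ factorization and the $O(n^{-1})$, $O(n^{-2})$ moment bounds that the paper leaves as a "routine exercise."
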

    \begin{proof} 
In all proofs of this paper  we do not reflect the dependence of the canonical moments on
  the vector of random moments and use the notation $ p_{2n, i}  =    p_{2n, i} (\vec{m}_{2n}) $.
    According to Lemma \ref{betadist}  the canonical
  moments $    p_{2n, i} $  are independent and $\beta(2n - i + 1, 2n - i + 1)$ distributed $(i=1,2,\dots,2n)$.  
   Since $q_{2n, i} = 1 - p_{2n, i} \sim \beta(2n - i + 1, 2n - i + 1)$ it follows from Lemma \ref{conv_beta} and the Delta method  that 
            \begin{align}
  \label{can1}          \sqrt{4n} (\log(q_{2n, k}) - \log(\tfrac{1}{2})) &\xrightarrow{d} \mathcal{N}(0, 1) \\
   \label{can2}          \sqrt{4n} (\log(p_{2n, k} (1 - p_{2n, k})) - \log(\tfrac{1}{4})) &\xrightarrow{P} 0
        \end{align}
%~ Because $q_{n, 1} ,  \ldots , q_{n, n}$  are independent we obtain  that 
%~ the random vector
        %~ \begin{align*}
            %~ B_n = \sqrt{4n} \big ((\log(q_{2n, 2}), \ldots, \log(q_{2n, 2k}))^T - (\log(\tfrac{1}{2}), \ldots, \log(\tfrac{1}{2}))^T \big)
        %~ \end{align*}
 %~ converges weakly to a standard $k$-dimensional Gaussian distribution.  \\
\par  \medskip
 Next, note that the representation \eqref{arcdet} follows from \eqref{detbound} and the fact that the canonical moments
 of the arcsine distribution are all given by $ 1/2$.
  Consequently, we can decompose the vector
 $H_{n, k}$  as follows 
        \begin{align*}
             \sqrt{4n}(    H_{n, k} - H_{n, k}^0)   = S_n - T_n~,
        \end{align*}
        where the components of the vectors $S_n$ and $  T_n$ are given by 
        \begin{align*}
            S_{ni} &= \sum \limits_{j = 1}^{i} \sqrt{4n}(i - j + 1)(\log(q_{2n, 2j}p_{2n, 2j}) + \log(q_{2n, 2j - 1} p_{2n, 2j - 1}) - 2 \log(\tfrac{1}{4})) ~,\\
            T_{ni} &= \sqrt{4n}\sum \limits_{j = 1}^{i} (\log(q_{2n, 2j}) - \log(\tfrac{1}{2})) ~,
            %~ R_{ni} &= \sqrt{4n}(\log(p_{n, 2i - 1} q_{n, 2i - 1}) - \log(\tfrac{1}{4}) + (\log(p_{n, 2j}q_{n, 2j}) - \log(\tfrac{1}{4})))~,
        \end{align*}
        respectively ($i=1,\ldots , k$). 
     Observing  \eqref{can2}  we see  that $S_n$ converges in probability to $0$. The weak convergence $T_n \xrightarrow{d} \mathcal{N}(0, \Sigma_k)$ is a routine exercise that follows from (\ref{can1}) and the independence of the $q_{2n, i}$ $(i = 1, \ldots, 2n)$.
     %~ For a proof of weak convergence of the random variable
      %~ $T_n$  note that 
%~ $
            %~ T_n = A_kB_n~,
%~ $
        %~ where the matrix $A_k \in \mathbb{R}^{k \times k}$  is given by 
        %~ \begin{align*}
            %~ A_k =
            %~ \begin{pmatrix}
                %~ 1 & 0 & 0 & \cdots & 0 \\
                %~ 1 & 1 & 0 & \cdots & 0 \\
                %~ 1 & 1 & 1 & \cdots & 0 \\
                %~ \vdots & \vdots & \vdots & \ddots & \vdots \\
                %~ 1 & 1 & 1 & \cdots & 1
            %~ \end{pmatrix}
        %~ \end{align*}
        %~ This yields the weak convergence $T_n \xrightarrow{d} \mathcal{N}(0, \Sigma_k)$ 
        %~ and the representation \eqref{sigma} for the matrix  $\Sigma_k = A_k A_k^t$
        %~ follows by a straightforward calculation.
    \end{proof}

  While Theorem \ref{thmfixk}  holds for any fixed $k \in \en$, the following result provides a process version.

       \begin{thm} \label{thm1}
       Let   $\vec{m}_{2n}$ denote a  uniformly distributed  random vector on $\mathcal{M}_{2n} ([0,1])$, then
  $$
  \big\{ \mathcal{G}_n(t)  \big \}_{t\in [0,1]}~:=~ \frac {2}{\sqrt{n}}
   \Big\{  D_{n,2\lfloor  nt \rfloor}   (\vec{m}_{2n}) -   D_{n,2\lfloor  nt \rfloor}^0  +       
     \frac{n}{2} r(t)  \Big\}_{t\in [0,1]} ~\Longrightarrow ~ \{ \mathcal{G}^{[0,1]} (t)\}_{t \in [0,1]}~,
   $$
where $ D_{n,2\lfloor  nt \rfloor}   (\vec{m}_{2n}) $ is defined in \eqref{hanproc},
    \begin{align} \label{rt}
        r(t) &= t + (1 - t)\log(1-t)~
    \end{align}
and $\mathcal{G}^{[0,1]}$ is a centered continuous Gaussian process on the interval $[0, 1]$ with covariance kernel
   \begin{eqnarray}     \label{k1}
            f(s, t) &= \int \limits_0^{s \wedge t} \frac{(t - x)(s - x)}{(1 - x)^2} \; dx = (s \wedge t)(2 - s \vee t) - (s + t - 2)\log(1 - s \wedge t) \label{def_f}
  \end{eqnarray}
    \end{thm}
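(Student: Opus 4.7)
The plan is to extend the proof of Theorem \ref{thmfixk} to process level, the new feature being that when $k = \lfloor nt\rfloor$ scales with $n$, the analogue of the $S$-term there develops a non-vanishing bias which turns out to be exactly $-\tfrac{n}{2}r(t)$ and thus produces the centering in the statement. As in the proof of Theorem \ref{thmfixk}, equation \eqref{detbound} and Lemma \ref{betadist} yield the decomposition
\begin{equation*}
D_{n,2k}(\vec m_{2n}) - D_{n,2k}^0 \;=\; S_{n,k} - T_{n,k},
\end{equation*}
where $S_{n,k} := \sum_{m=1}^k (k-m+1)\,Y_{n,m}$ with $Y_{n,m} := \log\bigl(16\,p_{2n,2m-1}q_{2n,2m-1}p_{2n,2m}q_{2n,2m}\bigr)$, and $T_{n,k} := \sum_{m=1}^k \log(2 q_{2n,2m})$; by Lemma \ref{betadist} the $Y_{n,m}$ are independent across $m$.

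From the digamma asymptotic $\psi(a)-\psi(2a) = -\log 2 - \tfrac{1}{4a} + O(a^{-2})$ I obtain, for $p \sim \beta(a,a)$, the expansions $E[\log(4pq)] = -\tfrac{1}{2a} + O(a^{-2})$ and $\operatorname{Var}(\log(4pq)) = \tfrac{1}{2a^2} + o(a^{-2})$. Applied termwise and combined with a Riemann-sum approximation, these give uniformly on compact subsets of $[0,1)$
\begin{equation*}
E\bigl[S_{n,\lfloor nt\rfloor}\bigr] = -\tfrac{n}{2}\int_0^t\!\tfrac{t-u}{1-u}\,du + o(\sqrt n) = -\tfrac{n}{2}\,r(t) + o(\sqrt n), \qquad \operatorname{Var}\bigl(T_{n,\lfloor nt\rfloor}\bigr) = O\bigl(\log\tfrac{1}{1-t}\bigr),
\end{equation*}
so the prescribed centering $\tfrac{n}{2}r(t)$ cancels $E[S]$ to leading order and $T_{n,\lfloor nt\rfloor}/\sqrt n \to 0$ in probability. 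Consequently
\begin{equation*}
\mathcal{G}_n(t) = \tfrac{2}{\sqrt n}\sum_{m=1}^{\lfloor nt\rfloor}(\lfloor nt\rfloor - m + 1)\bigl(Y_{n,m} - E Y_{n,m}\bigr) + o_P(1).
\end{equation*}
Finite-dimensional convergence then follows from the multivariate Lindeberg--Feller CLT applied to this weighted sum of independent centered variables (Lindeberg's condition is immediate from $\operatorname{Var}(Y_{n,m}) = O((n-m)^{-2})$), and a further Riemann-sum computation identifies the limiting covariance as $\int_0^{s\wedge t}(t-u)(s-u)/(1-u)^2\,du = f(s,t)$.

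The main obstacle is tightness in $l^\infty([0,1])$. For $s < t$, I would split $\mathcal{G}_n(t) - \mathcal{G}_n(s)$ (modulo $o_P(1)$) into a \emph{coefficient-shift} piece $\tfrac{2(\lfloor nt\rfloor - \lfloor ns\rfloor)}{\sqrt n}\sum_{m\le\lfloor ns\rfloor}\tilde Y_{n,m}$ and a disjoint \emph{new-indices} piece $\tfrac{2}{\sqrt n}\sum_{\lfloor ns\rfloor < m \le \lfloor nt\rfloor}(\lfloor nt\rfloor - m + 1)\tilde Y_{n,m}$; the two pieces are independent sums of independent centered summands, and variance estimates yield $E|\mathcal{G}_n(t) - \mathcal{G}_n(s)|^2 \le C_\delta (t-s)^2$ on any subinterval $[0,1-\delta]$, hence tightness there by the Kolmogorov--Chentsov criterion, with continuity of the limit following from continuity of $f$. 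The delicate point is the endpoint $t \to 1$: the termwise variance $1/(1-u)^2$ is non-integrable, so the constant $C_\delta$ blows up. This is reconciled with the fact that $f$ extends continuously to $(1,1)$ under the convention $0\log 0 = 0$ (giving $f(1,1) = 1$ and $f(1-\delta, 1-\delta) \to 1$) by checking directly that $\operatorname{Var}(\mathcal{G}_n(1) - \mathcal{G}_n(1-\delta)) \to 2\delta - \delta^2$ vanishes with $\delta$ uniformly in $n$; a two-scale argument then transfers tightness from the family $\{[0, 1-\delta]\}_{\delta > 0}$ to $[0,1]$.
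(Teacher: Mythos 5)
Your decomposition into the weighted sum $S_{n,\lfloor nt\rfloor}$ and the remainder $T_{n,\lfloor nt\rfloor}$, the identification of the bias $-\tfrac{n}{2}r(t)$ as $\mathbb{E}[S_{n,\lfloor nt\rfloor}]$, and the finite-dimensional CLT are all sound and structurally equivalent to the paper's own decomposition into $S_n, R_n, T_n, U_n$; your $U_n$ analogue is absorbed into the mean of $S$. The substantial difference, and where your argument has a genuine gap, is tightness.

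You establish $\mathbb{E}|\mathcal{G}_n(t)-\mathcal{G}_n(s)|^2 \le C_\delta(t-s)^2$ on $[0,1-\delta]$ and then invoke an unspecified ``two-scale argument'' plus the observation that $\operatorname{Var}(\mathcal{G}_n(1)-\mathcal{G}_n(1-\delta)) \to 2\delta-\delta^2$. But a single variance bound at the pair $(1-\delta,1)$ does not control $\sup_{t\in[1-\delta,1]}|\mathcal{G}_n(t)-\mathcal{G}_n(1-\delta)|$, which is what you actually need; and the relevant sum is not a martingale in $t$ because the weights $\lfloor nt\rfloor-m+1$ move with $t$, so no maximal inequality applies directly. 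One can rescue this (telescope the weighted sum into partial sums $P_k$, then bound $\tfrac{2}{\sqrt n}\sup_k(n-k)|P_k|$ and $\tfrac{2}{\sqrt n}\sum_{j>l_0}|P_j-P_{l_0}|$ via Doob's inequality and a dyadic splitting, getting $O_P(\sqrt\delta)$ uniformly in $n$), but this is a real argument, not a remark, and you have not supplied it. The paper sidesteps the whole endpoint difficulty by working with \emph{fourth} moments: the bound
\begin{align*}
\mathbb{E}\bigl[(S_n(t)-S_n(s))^4\bigr] \le C'\Bigl(\sum_i d_{n,i}^2\tfrac{1}{(n-i+1)^2}\Bigr)^2 \le C''\bigl(t-s+\tfrac1n\bigr)^2
\end{align*}
holds uniformly on all of $[0,1]$ without any blow-up, because in the contributing piece $(t-s)^2\cdot\tfrac{s}{1-s}$ one has $t-s\le 1-s$ so the quotient is bounded by $t-s$; squaring then delivers the exponent $2$ needed for the Shorack--Wellner criterion. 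With only second moments this mechanism is unavailable (the variance itself is only $O(t-s)$ near $t=1$), which is precisely why your $C_\delta$ blows up.

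Two smaller points: you also need $T_{n,\lfloor nt\rfloor}/\sqrt n \Rightarrow 0$ as a process, not merely pointwise; since your $T$ is a genuine partial sum this does follow from Kolmogorov's maximal inequality plus a bound on the drift, but it should be said. And the remark that ``continuity of the limit follows from continuity of $f$'' is false as stated: continuity of a covariance kernel does not imply sample-path continuity of a Gaussian process. The paper instead derives $\mathbb{E}[(\mathcal{G}^{[0,1]}(t)-\mathcal{G}^{[0,1]}(s))^4]\le 48(t-s)^2$ from $f$ and cites the Kolmogorov--Chentsov criterion; you should do the same.
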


  {\bf Proof:} It is shown later (more precisely, in the proof of (\ref{ass1})) that the kernel $f$ is in fact nonnegative definite, that is for all $ k\in \en$, $s_1,\ldots, s_k, t_1,\ldots, t_k \in [0, 1]$
the matrices $(f(s_i,t_j))_{i,j=1}^k$ are nonnegative definite.
       A simple calculation shows   that    $   \mathbb{E}[(\mathcal{G}^{[0,1]} (t) - \mathcal{G}^{[0,1]}(s))^4] \le 48 (t - s)^2$,
    and consequently   the existence of the process $\mathcal{G}^{[0,1]} = \{\mathcal{G}^{[0,1]}(t) \}_{t\in [0,1]}$  follows from   Theorem 3.23 in \cite{kall2002}.
    Moreover,  since $\mathcal{G}^{[0,1]}$ is continuous and $l^\infty([0, 1])$ is a complete space, Theorem 1.3.2 in \cite{vaartwell1995}
    shows that $\mathcal{G}^{[0,1]}$ is tight. For  the following discussion we define  
    \begin{align*}
   \tilde \xi_{n, i} &= \log(q_{2n, 2i}p_{2n, 2i}) + \log(q_{2n, 2i - 1}p_{2n, 2i - 1}),  \\
       \xi_{n, i}(t) &= \frac{2}{\sqrt{n}} (\lfloor nt \rfloor - i + 1) \big(\tilde\xi_{n, i} - \mathbb{E}[\tilde\xi_{n, i}]\big),
    \end{align*}
and obtain by \eqref{detbound} 
 the decomposition
\be
\mathcal{G}_n(t) &= S_n(t) + 2R_n(t) - 2T_n(t) + 2U_n(t),
\ee
 where the processes  $S_n, R_n, T_n$ and $U_n$ are defined by
        \begin{align*}
           S_n(t) &= \sum \limits_{i = 1}^{\lfloor nt \rfloor - 1} \xi_{n, i}(t),  \\
            T_n(t) &= \frac{1}{\sqrt{n}}\Big\{ \lfloor nt \rfloor \log(2) - \log(p_{2n, 2\lfloor nt\rfloor}) + \sum \limits_{i = 1}^{\lfloor nt \rfloor - 1}\log(q_{2n,2i})\Big\}, \\
            R_n(t) &= \frac{I\{nt \ge 1\}}{\sqrt{n}} \log(q_{2n,2\lfloor nt \rfloor - 1}p_{2n,2\lfloor nt \rfloor - 1}), \\
            U_n(t) &= \frac{1}{\sqrt{n}}\Big\{  \sum \limits_{i = 1}^{\lfloor nt \rfloor - 1}(\lfloor nt \rfloor - i + 1)\mathbb{E}[\tilde\xi_{n, i}] - D^0_{n, 2\lfloor nt \rfloor} + \frac{n}{2}r(t) + \lfloor nt \rfloor\log(2) \Big\},
        \end{align*}
        respectively.    With these notations  the proof of Theorem \ref{thm1} follows from the assertions
        \begin{align}
 & S_n \Longrightarrow \mathcal{G}^{[0,1]} ~, \label{ass1}  \\
& T_n \Longrightarrow 0  \label{ass2}  ~, \\
& R_n \Longrightarrow 0  \label{ass3} ~,  \\
& ||U_n||_\infty \xrightarrow{n \to \infty} 0 ~, \label{ass4}
    \end{align}
and a simple application of Slutsky's theorem.

 {\it Proof of (\ref{ass1}).}     For each $k \in \mathbb{N}$  consider $0 = t_0 \le t_1 \le \ldots \le t_k \le 1$ and define the 
 $k$-dimensional  random variable
$
            S_n^* := (S_n(t_1), \ldots, S_n(t_k))^T.
$   Let $c = (c_1, \ldots, c_k)^T \in \mathbb{R}^k$ be an arbitrary vector. then
        \begin{align*}
            c^T S_n^* &=\sum \limits_{i = 1}^k c_i S_n(t_i)
            = \sum \limits_{i = 1}^k c_i \sum \limits_{j = 1}^{\lfloor nt_i \rfloor - 1} \xi_{n, j}(t_i)
            = \sum \limits_{i = 1}^k c_i \sum \limits_{l = 1}^i \sum \limits_{j = \lfloor nt_{l - 1} \rfloor \vee 1}^{\lfloor nt_l \rfloor - 1} \xi_{n, j}(t_i) \\
            &= \frac{2}{\sqrt{n}}\sum \limits_{l = 1}^k \sum \limits_{j = \lfloor nt_{l - 1} \rfloor \vee 1}^{\lfloor nt_l \rfloor - 1} \big(\tilde \xi_{n, j} - \mathbb{E}[\tilde \xi_{n, j}]\big)\sum \limits_{i = l}^k c_i (\lfloor nt_i\rfloor -j + 1)
        \end{align*}
In order to calculate the variance  of $c^TS_n^*$ we assume $0 \leq s \leq t \leq 1$, use the approximation \eqref{var_genau} in the Appendix and obtain 
        \begin{align*}
            \operatorname{cov}(S_n(s), S_n(t)) =& \frac{4}{n} \sum \limits_{i = 1}^{\lfloor ns \rfloor - 1}(\lfloor nt \rfloor - i + 1)(\lfloor ns \rfloor - i + 1) \operatorname{Var}(\tilde \xi_{n, i}) \\
            =& \frac{4}{n} \sum \limits_{i = 1}^{\lfloor ns \rfloor - 1}\frac{(\lfloor nt \rfloor - i + 1)(\lfloor ns \rfloor - i + 1)}{4(n - i + 1)^2} \\
            &+ \frac{4}{n} \sum \limits_{i = 1}^{\lfloor ns \rfloor - 1}(\lfloor nt \rfloor - i + 1)(\lfloor ns \rfloor - i + 1) O\left((n - i + 1)^{-3}\right) \\
            =& \frac{1}{n}\sum \limits_{i = 1}^{\lfloor ns \rfloor - 1}\frac{\left(t - \frac{i - 1 + nt - \lfloor nt \rfloor}{n}\right)\left(s - \frac{i - 1 + ns - \lfloor ns \rfloor }{n}\right)}{(1 - \frac{i - 1}{n})^2} + \frac{4}{n} \sum \limits_{i = 1}^{\lfloor ns \rfloor - 1} O((n - i + 1)^{-1}).
        \end{align*}
         Interpreting the first term as Riemann-sum, we can calculate the limit
        \begin{align*}
     \lim_{n\to \infty}   \operatorname{cov}(S_n(s), S_n(t)) =       \int \limits_0^s \frac{(t - x)(s - x)}{(1 - x)^2} \; dx = f(s, t),
        \end{align*}
        which gives
        \begin{align*}
             \lim_{n\to \infty}     \operatorname{Var}(c^T S_n^*) =     \lim_{n\to \infty}   c^T \operatorname{cov}(S_n^*, S_n^*) c \to c^T \Sigma c ~,
        \end{align*}
        where the matrix $\Sigma$ is given by $\Sigma=(f(t_i,t_j))_{i,j=1,\dots,k}$ and  the covariance kernel $f$ is defined in  (\ref{def_f}).
        Consequently we obtain  that this  kernel is nonnegative definite. \\
We now prove the weak convergence of $c^TS_n^*$ by verifying the  Lyapunov--condition. For this purpose we use the notation  $c^* := \max\{|c_1|, \ldots, |c_n|\}$
and obtain
        \begin{align*}
            &\frac{2^4}{n^2}\sum \limits_{l = 1}^k \sum \limits_{j = \lfloor nt_{l - 1} \rfloor \vee 1}^{\lfloor nt_l \rfloor - 1} \mathbb{E}\big[(\tilde \xi_{n, j} - \mathbb{E}[\tilde \xi_{n, j}])^4\big] \Big(\sum \limits_{i = l}^k c_i (\lfloor nt_i\rfloor -j + 1) \Big)^4 \\
            & \le \frac{2^4}{n^2} \sum \limits_{l = 1}^k \sum \limits_{j = \lfloor nt_{l - 1} \rfloor \vee 1}^{\lfloor nt_l \rfloor - 1} \mathbb{E}\big[(\tilde \xi_{n, j} - \mathbb{E}[\tilde \xi_{n, j}]^4\big] (kc^*)^4 (n - j + 1)^4
           ~ \le ~\frac{(2kc^*)^4 C^2}{n} \to 0~,
        \end{align*}
where we have used the estimate  (\ref{viertes_moment}) in Appendix  \ref{appendix}  for  the moments. Consequently,  Lyapunov`s Theorem implies convergence of the finite dimensional distributions, that is
  \begin{align*}
            S_n^* = (S_n(t_1), \ldots, S_n(t_k))^T \xrightarrow{d} \mathcal{N}(0, \Sigma).
        \end{align*}
We finally prove that  $S_n$ is asymptotically tight, that is
      \begin{align} \label{tight}
     \lim_{m\to \infty}        \limsup \limits_{n \to \infty} \mathbb{P}\left(\omega_n\left(\tfrac{1}{m}\right) > \epsilon \right) ~=~0~,
        \end{align}
    where
$
            \omega_n(a) = \sup \left \{ | S_n(t) - S_n(s) | \; | \; 0 \le t - s \le a\right \}
$
        denotes the modulus of continuity of the process  $S_n$.
The statement   \eqref{ass1} then follows  from  Theorem 1.5.4 in \cite{vaartwell1995}.  For a
proof of \eqref{tight}  we introduce  the notation
        \begin{align*}
            d_{n, i} = \frac{2}{\sqrt{n}}
            \begin{cases}
                \lfloor nt \rfloor - \lfloor ns \rfloor & i \le \lfloor ns \rfloor - 1 \\
                \lfloor nt \rfloor - i + 1 & \lfloor ns \rfloor - 1 < i \le \lfloor nt \rfloor - 1 \\
                0 & \text{else}
            \end{cases}~,
        \end{align*}
and obtain the following representation
        \begin{align*}
            S_n(t) - S_n(s) = \sum \limits_{i = 1}^n d_{n, i} \big(\tilde \xi_{n, i} - \mathbb{E}[\tilde \xi_{n, i}]\big).
        \end{align*}
        The inequalities (\ref{var_grob}) and (\ref{viertes_moment}) in the Appendix then yield 
        \begin{align}
            \mathbb{E}\left[\left(S_n(t) - S_n(s)\right)^4\right]
           & \le
            %  \sum \limits_{i = 1}^n \frac{d_{n, i}^4 C^2}{(n - i + 1)^4} + \sum\limits_{i, j = 1, i \ne j}^n  \frac{d_{n, i}^2 d_{n, j}^2 C^2}{(n - i + 1)^2 (n - j + 1)^2}
             C^2 \Big(\sum \limits_{i = 1}^n d_{n, i}^2 \frac{1}{(n - i + 1)^2}  \Big)^2 \notag \\
           & \le (2C)^2   \Big(\frac{1}{n}  \sum \limits_{i = 1}^{\lfloor ns \rfloor - 1} \frac{(t - s + \frac{1}{n})^2}{(1 - \frac{i}{n} + \frac{1}{n})^2} + 
           \frac{1}{n}
           \sum \limits_{i = \lfloor ns \rfloor \vee 1}^{\lfloor nt \rfloor - 1}  \frac{(t -\frac{i - 1}{n})^2}{(1 - \frac{i - 1}{n})^2}\Big)^2 \notag \\
           & \le (2C)^2 \Big(I\left\{s \ge \tfrac{2}{n}\right\} \left(t - s + \tfrac{1}{n}\right)^2 \int \limits_{\frac{1}{n}}^s \frac{1}{\left(1-x + \frac{1}{n}\right)^2} \; dx + \frac{(\lfloor nt \rfloor - \lfloor ns \rfloor)}{n} \Big)^2 \notag \\
           & \le (2C)^2\Big(I\left\{s \ge \tfrac{2}{n}\right\}\left(t - s + \tfrac{1}{n}\right)^2 \left(\tfrac{1}{1 - s +{1}/{n}} - 1 \right) + \left(t - s + \tfrac{1}{n}\right) \Big)^2\notag \\
           & \le (4C)^2 \big(t - s + \tfrac{1}{n}\big)^2 .\label{ungl_mom_modul}
        \end{align}
        Consequently, we obtain
        \begin{align} \label{term1}
            &\limsup \limits_{n \to \infty}\sum \limits_{k = 1}^m E\big [( S_n (\tfrac{k}{m} ) - S_n (\tfrac{k - 1}{m} ))^4\big]
            \le \limsup \limits_{n \to \infty}\sum \limits_{k = 1}^m (4C)^2\big (\tfrac{1}{m} + \tfrac{1}{n}\big)^2 = \frac{(4C)^2}{m}  .
        \end{align}
Now assume that $ 0 \le r \le s \le t \le 1 $. If  $t - r \ge \tfrac{1}{n}$, H\"{o}lders's inequality and (\ref{ungl_mom_modul}) yield
the estimate
        \begin{align*}
            \mathbb{E}\left[ (S_n(s)-S_n(r))^2 (S_n(t)-S_n(s))^2\right]
            &
            %\le \sqrt{\mathbb{E}\left[S_n(r, s]^4\right] \mathbb{E}\left[S_n(s, t]^4\right]}
            \le (4C)^2\left(s - r + \tfrac{1}{n}\right)\left(t - s + \tfrac{1}{n} \right) \\
            & \le (4C)^2 \left(\tfrac{t - r}{2} + \tfrac{1}{n}\right)^2 \le (6C)^2(t - r)^2,
        \end{align*}
        which also holds if  $t - r < \tfrac{1}{n}$ (because we  have $S_n(r) = S_n(s)$ or $S_n(s) = S_n(t)$ in this case). Therefore
Lemma 3.1 in \cite{schowell1986}  and \eqref{term1} show
  that
             \begin{align*}
         \limsup \limits_{n \to \infty}
         \mathbb{P}\left(\omega_n\left(\tfrac{1}{m}\right) \ge \epsilon\right) & \le
                 \limsup \limits_{n \to \infty}    \frac{1}{\epsilon^4} \Big\{  \sum \limits_{k = 1}^m
         \mathbb{E}\big [\big (S_n (\tfrac{k}{m}) -S_n (\tfrac{k - 1}{m} ) \big)^4 \big]+ \frac{K(6C)^2 }{ m }  \Big\} \\
         & = \frac{(4C)^2 + K(6C)^2 }{\epsilon^4 m}
      \end{align*}
      for an absolute constant $K$. This proves \eqref{tight} and completes the proof of \eqref{ass1}.  \medskip

      {\it Proof of \eqref{ass2}  and  \eqref{ass3}:} These statements follow by similar arguments as given in the proof of assertion \eqref{ass1}
using the estimates (\ref{erwartung}) - (\ref{var_trigamma}) in Appendix  \ref{appendix}. The details are omitted for the sake of brevity.
\medskip

      {\it Proof of \eqref{ass4}:} By  \eqref{arcdet}  we have 
        \begin{align*}
            D^0_{n, 2\lfloor nt \rfloor}  &= - (2\lfloor nt \rfloor +1)  \lfloor nt \rfloor  \log(2)~,
        \end{align*}
    and  the estimate (\ref{erwartung}) from Appendix \ref{appendix} yields the approximation
        \begin{align*}
            \sum \limits_{i = 1}^{\lfloor nt \rfloor - 1} (\lfloor nt \rfloor - i + 1) \mathbb{E}[\tilde \xi_{n, i}]
            &=\sum \limits_{i = 1}^{\lfloor nt \rfloor - 1} (\lfloor nt \rfloor - i + 1) \big(-4\log(2) - \tfrac{1}{2(n - i + 1)} + O\big(\tfrac{1}{(2n - 2i + 1)^2}\big)\big) \\
            &= -2 \log (2) \big( \lfloor nt \rfloor^2  + 2\lfloor nt \rfloor \big)+ \log(16) - \tfrac{\lfloor nt \rfloor}{2} + \tfrac{n -\lfloor nt \rfloor }{2}(G_n - G_{n - \lfloor nt \rfloor + 1}) \\
            & + O(\log(n))
        \end{align*}
       (uniformly with respect to $t \in [0,1]$), where   $G_n =\sum_{i=1}^n \tfrac{1}{i} $ is the $n$th partial sum of the harmonic series. Therefore
        \begin{align*}
            U_n(t) &= \frac{1}{\sqrt{n}} \Big (\tfrac{nt-\lfloor nt \rfloor}{2} +\tfrac{n}{2}(1 - t)\log(1 - t) +\tfrac{n - \lfloor nt \rfloor }{2}(G_n - G_{n - \lfloor nt \rfloor + 1})\Big) + O\Big (\frac{\log(n)}{\sqrt{n}}\Big).
        \end{align*}
        Using the approximation $G_n = \log(n) + \gamma + O(\tfrac{1}{n})$, where $\gamma$ is the Euler-Mascheroni constant, we can easily see that
        \begin{align*}
            U_n (t) &= \frac{1}{\sqrt{n}} \Big \{ \tfrac{n}{2}(1 - t)\log(1 - t) - \tfrac{n - nt}{2}\log\Big (1 - t + \tfrac{nt - \lfloor nt \rfloor + 1}{n}\Big)\Big \} 
            + O\Big(\frac{\log(n)}{\sqrt{n}}\Big) \\
            &= \frac{-n(1 - t)}{2\sqrt{n}}\log\Big (1 + \tfrac{nt - \lfloor nt \rfloor + 1}{n(1 - t)}\Big ) + O\Big(\frac{\log(n)}{\sqrt{n}}\Big)   ~=~ o(1)~,
        \end{align*}
        uniformly with respect $t\in [0,1]$, which completes the proof of Theorem \ref{thm1}.  \hfill $\Box$

\bigskip

\begin{rem} {\rm Similar results as stated in Theorem \ref{thmfixk} and \ref{thm1} 
can be obtained for the Hankel  matrices  $\underline{H}_{2n+1} = (m_{i+j+1} )_{i,j=0}^n$,
$\overline{H}_{2n} = (m_{i+j+1}- m_{i+j +2})_{i,j=0}^{n-1}$ and $\overline{H}_{2n+1} = (m_{i+j} -m_{i+j +1})_{i,j=0}^n$, which are
commonly used to characterize Hausdorff moment sequences [see \cite{karlin1966}]. The details are omitted for the sake of brevity.
}
\end{rem}

 \subsection{Hankel determinants from $\mathcal{M}_{2n} ([0,\infty ))$ and  $\mathcal{M}_{2n} (\er )$ } \label{sec32}

In this section we will derive analogues of Theorem \ref{thm1} for random moment sequences on  unbounded moment spaces, where the corresponding distributions are defined by \eqref{momdichte0infty} and \eqref{momdichteer}, respectively.
For the sake of brevity we omit the discussion of $D_{n,2k}  (\vec{m}_{2n}) $ for fixed $k$ (corresponding results can
be easily obtained using similar arguments as given in the proof of Theorem \ref{thmfixk}) and concentrate on the stochastic process
$ \{ D_{n,2\lfloor  nt \rfloor}   (\vec{m}_{2n}) \}_{t\in [0,1]}$.

       \begin{thm} \label{thm2}
       Let   $\vec{m}_{2n}$ denote a  random vector on $\mathcal{M}_{2n} ([0,\infty))$ with density $g_n^{(\gamma,\delta)}$ defined in \eqref{momdichte0infty}, where
       $\gamma_{2n,1}, \ldots  \gamma_{2n,2n} > 0$  are bounded by a constant which does not depend on $n$ and $\delta_{2n, i} = 2n - i + 1 + \gamma_{2n, i}$, then
  $$
  \big\{ \mathcal{G}_n(t)  \big \}_{t\in [0,1]}~:=~
   \Big\{ {1\over n}  D_{n,2\lfloor  nt \rfloor}   (\vec{m}_{2n})  \Big\}_{t\in [0,1]} ~\Longrightarrow ~ \big \{ \mathcal{G}^{[0,\infty)} (t) \}_{t\in [0,1]} 
   $$
where $ D_{n,2\lfloor  nt \rfloor}   (\vec{m}_{2n}) $ is defined in \eqref{hanproc},
and $\mathcal{G}^{[0,\infty)} $ is a continuous Gaussian process on the interval $[0, 1]$ with mean
$ -  r(t) /2 $
and covariance kernel
   \begin{align}  \label{def_g}    
            g(s, t) &= \int \limits_0^{s \wedge t} \frac{(t - x)(s - x)}{1 - x} \; dx \\
            &= \frac {1}{2}(s \wedge t)(s+t-2+ s \vee t) +  (s-1)(t-1)  \log(1 - s \wedge t) \notag
  \end{align}
    \end{thm}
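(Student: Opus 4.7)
The plan is to follow the strategy of the proof of Theorem~\ref{thm1}, but with \eqref{detunbound} in place of \eqref{detbound} and Lemma~\ref{gammadist} in place of Lemma~\ref{betadist}. By \eqref{detunbound},
$$\mathcal{G}_n(t) = \frac{1}{n}\sum_{j=1}^{\lfloor nt\rfloor}(\lfloor nt\rfloor - j + 1)\log\bigl(z_{2n,2j-1}z_{2n,2j}\bigr),$$
and under the parametrization $\delta_{2n,i}=2n-i+1+\gamma_{2n,i}$, Lemma~\ref{gammadist} yields that the $z_{2n,k}$ are independent with $z_{2n,k}\sim\gamma(\alpha_{n,k},\alpha_{n,k})$, where $\alpha_{n,k}:=2n-k+1+\gamma_{2n,k}$, so they have unit mean and concentrate near $1$. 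I would then decompose $\mathcal{G}_n(t)=S_n(t)+\mu_n(t)$ by subtracting the expectations of the $\log(z_{2n,2j-1}z_{2n,2j})$.

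For $\mu_n(t)$, the classical digamma expansion $\psi(\alpha)-\log\alpha = -\tfrac{1}{2\alpha}+O(\alpha^{-2})$, combined with the uniform boundedness of the $\gamma_{2n,k}$, gives $\mathbb{E}[\log z_{2n,k}]=-\tfrac{1}{2\alpha_{n,k}}+O(\alpha_{n,k}^{-2})$; since $\alpha_{n,2j-1},\alpha_{n,2j}=2(n-j+1)+O(1)$, a Riemann sum argument yields $\mu_n(t)\to -\tfrac{1}{2}\int_0^t (t-x)/(1-x)\,dx=-r(t)/2$ uniformly in $t$. For finite-dimensional convergence of $S_n$, the trigamma expansion $\mathrm{Var}(\log z_{2n,k})=\psi'(\alpha_{n,k})=\tfrac{1}{\alpha_{n,k}}+O(\alpha_{n,k}^{-2})$ combined with a second Riemann sum gives $\mathrm{Cov}(S_n(s),S_n(t))\to g(s,t)$, and nonnegative definiteness of $g$ drops out of the computation. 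Lyapunov's CLT can then be invoked using the fourth central moment estimate $\mathbb{E}[(\log z_{2n,k}-\mathbb{E}\log z_{2n,k})^4] = \psi'''(\alpha_{n,k}) + 3\psi'(\alpha_{n,k})^2 = O(\alpha_{n,k}^{-2})$, which is uniform in $k$ because $\alpha_{n,k}\ge 1$.

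Asymptotic tightness of $S_n$ in $l^\infty([0,1])$ would follow from a moment bound $\mathbb{E}[(S_n(t)-S_n(s))^4]\le C(t-s+\tfrac{1}{n})^2$, obtained by expanding the fourth moment via independence in exactly the same fashion as \eqref{ungl_mom_modul}, and then applying Lemma~3.1 of \cite{schowell1986}. The existence and continuity of the limit Gaussian process $\mathcal{G}^{[0,\infty)}$ follow from Kolmogorov's criterion once one checks that the incremental variance $g(t,t)-2g(s,t)+g(s,s)$ is $O((t-s)^2\log((t-s)^{-1}))$ uniformly; a final application of Slutsky combines $\mu_n\to -r/2$ with $S_n\Rightarrow$ (centered Gaussian with covariance $g$) to deliver the theorem. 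I expect the main technical obstacle to be the uniform fourth-moment control on $\log z_{2n,k}$: whereas in the bounded case all canonical moments sit inside $(0,1)$, here the Gamma variables $z_{2n,k}$ near the endpoint $k=2n$ have shape parameter only of order $O(1)$, so one must supplement the large-$\alpha$ polygamma asymptotics with an absolute upper bound on $\psi'''(\alpha)+3\psi'(\alpha)^2$ for small $\alpha$ in order to secure the $O(\alpha_{n,k}^{-2})$ estimate uniformly across all $k\in\{1,\ldots,2n\}$.
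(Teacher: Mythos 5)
Your proposal is correct and follows essentially the same route as the paper's proof: decompose $\tfrac{1}{n}D_{n,2\lfloor nt\rfloor}$ into its mean plus a centered process, use the normalization $\delta_{2n,i}=2n-i+1+\gamma_{2n,i}$ together with the digamma/trigamma expansions \eqref{mean_gamma}--\eqref{fourth_moment_gamma} to drive the mean to $-r(t)/2$ and the covariance to $g(s,t)$, and then re-run the Lyapunov and tightness machinery from the proof of Theorem~\ref{thm1}. Your remark that the asymptotic polygamma estimates must be supplemented by absolute bounds (e.g.\ on $\psi'''(\alpha)+3\psi'(\alpha)^2$ for $\alpha\ge 1$) to handle the $O(1)$ shape parameters near $k=2n$ is a genuine subtlety the paper leaves implicit under ``similar arguments,'' and your resolution is the right one.
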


  {\bf Proof:}     We will use the decomposition
 ${1\over n}   D_{n,2\lfloor  nt \rfloor}   (\vec{m}_{2n}) = S_n^{[0, \infty)}(t) + R_n^{[0, \infty)}(t)$,
        where the processes $ S_n^{[0, \infty)} $ and $R_n^{[0, \infty)}$  are defined by
        \begin{align*}
            S_n^{[0, \infty)}(t) &=  {1\over n}   \left( D_{n,2\lfloor  nt \rfloor}   (\vec{m}_{2n}) - \mathbb{E}[ D_{n,2\lfloor  nt \rfloor}   (\vec{m}_{2n}) ] \right) \\
            R_n^{[0, \infty)}(t) &= \frac{1}{n} \mathbb{E}[ D_{n,2\lfloor  nt \rfloor}   (\vec{m}_{2n}) ]
        \end{align*}
        Observing the fact that $bX \sim \gamma(a, 1)$, whenever  $X \sim \gamma(a, b)$, and using the approximations (\ref{mean_gamma}) - (\ref{fourth_moment_gamma}) from the Appendix
        it can be shown by similar arguments as given in the proof of Theorem \ref{thm1}  that  $S_n^{[0, \infty)}$ converges weakly to a centered continuous Gaussian process on the interval $[0, 1]$
        with covariance kernel \eqref{def_g}.  For the remaining term $R_n^{[0, \infty)}(t)$ we use  \eqref{detunbound}, Lemma \ref{gammadist}
   and      the approximation
                \begin{align*}
            \mathbb{E}(\log(z_{2n, i})) &= \mathbb{E}\left[\log \left(z_{2n, i}\cdot(2n - i + 1 + \gamma_{2n, i})\right) \right] - \log(2n - i + 1 + \gamma_{2n, i}) \\
            &= -\frac{1}{2(2n - i + 1 + \gamma_{2n, i})} +O((2n - i + 1 + \gamma_{2n, i})^{-2}).
        \end{align*}
This yields  (uniformly with respect to $t \in [0,1]$)
        \begin{align*}
            R_n^{[0, \infty)}(t) = -\frac{1}{4n}\sum \limits_{i = 1}^{\lfloor nt \rfloor}\left( \frac{t - \frac{i}{n}+ \frac{\lfloor nt \rfloor - nt + 1}{n}}{1 - \frac{i}{n} + \frac{\gamma_{2n, 2i} + 1/2}{n}} + \frac{t - \frac{i}{n}+ \frac{\lfloor nt \rfloor - nt + 1}{n}}{1 - \frac{i}{n} + \frac{\gamma_{2n, 2i - 1} + 1}{n}} \right) + O\left(\frac{\log(n)}{n}\right),
        \end{align*}
        and a careful calculation shows that this term converges uniformly to $-r(t)/2$, where $r(t)$ is defined in \eqref{rt}.
This yields the assertion. \hfill  $\Box$

\bigskip

We conclude this section with a corresponding result  for the moment space  $\mathcal{M}_{2n} (\er)$. The proof is similar to that of Theorem \ref{thm1} and therefore
omitted. 
    \begin{thm} \label{thm3}
       Let   $\vec{m}_{2n-1}$ denote a  random vector on $\mathcal{M}_{2n-1}  (\er)$ with density $h_n^{(\gamma,\delta)}$ defined in \eqref{momdichte0infty}, where
       $\gamma_{n,1}, \ldots  \gamma_{n,n}$  are bounded by a constant which does not depend on $n$ and $\delta_{n, 2i} = 2n - 2i + \gamma_{n, i}$, then
  $$
   \Big\{ {1\over n}  D_{n,2\lfloor  (n-1)t \rfloor}   (\vec{m}_{2n-1})  \Big\}_{t\in [0,1]} ~\Longrightarrow ~ \{\mathcal{G}^{\mathbb{R}}(t)\}_{t \in [0,1]}
   $$
where $ D_{n,2\lfloor  (n-1) t \rfloor}   (\vec{m}_{2n}) $ is defined in \eqref{hanproc},
and $\mathcal{G}^{\mathbb{R}} $ is a continuous Gaussian process on the interval $[0, 1]$ with mean
$ -  r(t) /4 $  and covariance kernel $ g(s, t)/2$, defined by \eqref{rt} and  \eqref{k1}, respectively.
    \end{thm}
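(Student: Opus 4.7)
The plan is to follow the scheme of Theorem \ref{thm2}, adapted to the single family of Jacobi coefficients $a_{n,j}$ (Lemma \ref{gammadist2}) in place of the two families of $z$-variables used for $\mathcal{M}_{2n}([0,\infty))$. The starting point is the representation \eqref{detunbound2}, which yields
\begin{equation*}
 \frac{1}{n} D_{n,2\lfloor (n-1)t\rfloor}(\vec{m}_{2n-1}) \;=\; \frac{1}{n}\sum_{j=1}^{\lfloor (n-1)t\rfloor}(\lfloor (n-1)t\rfloor - j + 1)\,\log a_{n,j}.
\end{equation*}
I would split this into the centered process
$S_n^{\mathbb{R}}(t) = \frac{1}{n}(D_{n,2\lfloor (n-1)t\rfloor} - \mathbb{E}[D_{n,2\lfloor (n-1)t\rfloor}])$
and the deterministic drift $R_n^{\mathbb{R}}(t)= \frac{1}{n}\mathbb{E}[D_{n,2\lfloor (n-1)t\rfloor}]$, and prove weak convergence of $S_n^{\mathbb{R}}$ to a centered Gaussian process with covariance $g(s,t)/2$ and uniform convergence $R_n^{\mathbb{R}}\to -r(t)/4$; Slutsky's theorem then combines the two pieces.

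The key input is that, under the parametrization $\delta_{n,2i}=2n-2i+\gamma_{n,i}$, the rescaled variables $\delta_{n,2j}\,a_{n,j}$ are independent and $\gamma(\gamma_{n,j}+2n-2j,1)$-distributed with shape growing linearly in $n-j$. Applying the Appendix Gamma-moment estimates (the same ones used in the proof of Theorem \ref{thm2}), I obtain
$\mathbb{E}[\log a_{n,j}] = -\tfrac{1}{2(2n-2j+\gamma_{n,j})}+O((n-j)^{-2})$,
$\operatorname{Var}(\log a_{n,j}) = \tfrac{1}{2n-2j+\gamma_{n,j}}+O((n-j)^{-2})$,
and a fourth central moment of order $(n-j)^{-2}$. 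The covariance of $S_n^{\mathbb{R}}(s)$ and $S_n^{\mathbb{R}}(t)$ is then a Riemann sum that converges to
\begin{equation*}
\frac{1}{2}\int_0^{s\wedge t}\frac{(t-x)(s-x)}{1-x}\,dx \;=\; \frac{g(s,t)}{2},
\end{equation*}
the extra factor $\tfrac{1}{2}$ (compared with Theorem \ref{thm2}) reflecting that \eqref{detunbound2} contains a single product $\prod a_j$ instead of the double product $\prod z_{2j-1}z_{2j}$ in \eqref{detunbound}. Finite-dimensional convergence to a Gaussian limit follows verbatim from Lyapunov's condition, applied exactly as in the proof of \eqref{ass1}, using the bounded fourth central moment.

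Tightness of $S_n^{\mathbb{R}}$ in $\ell^\infty([0,1])$ is handled by a fourth-moment modulus-of-continuity bound modelled on \eqref{ungl_mom_modul}, followed by Lemma 3.1 of Shorack--Wellner as in the proof of Theorem \ref{thm1}. For the drift term, the expansion of $\mathbb{E}[\log a_{n,j}]$ gives
\begin{equation*}
R_n^{\mathbb{R}}(t) \;=\; -\frac{1}{4n}\sum_{j=1}^{\lfloor (n-1)t\rfloor}\frac{\lfloor (n-1)t\rfloor-j+1}{n-j+\tfrac{1}{2}\gamma_{n,j}} + O\!\Big(\frac{\log n}{n}\Big),
\end{equation*}
and identifying this as a Riemann sum yields uniform convergence to $-\tfrac{1}{4}\int_0^t\frac{t-x}{1-x}\,dx=-r(t)/4$. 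I expect the main technical nuisance to be this last step: establishing uniformity in $t\in[0,1]$ near the endpoint $t=1$, where the kernel $(1-x)^{-1}$ is singular but integrable, requires the same careful remainder bookkeeping that appears in the treatment of $U_n$ in the proof of Theorem \ref{thm1}. The boundedness assumption on the $\gamma_{n,k}$ keeps the error terms negligible uniformly, so no essentially new difficulty arises compared with Theorems \ref{thm1} and \ref{thm2}.
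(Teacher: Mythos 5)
Your proposal is correct and follows precisely the route the paper implicitly prescribes: the paper omits the proof of Theorem \ref{thm3}, stating only that it is ``similar to that of Theorem \ref{thm1}.'' Your argument fills that gap in the natural way, using \eqref{detunbound2}, Lemma \ref{gammadist2}, the fact that $\delta_{n,2j}a_{n,j}\sim\gamma(2n-2j+\gamma_{n,j},1)$, the Gamma moment estimates \eqref{mean_gamma}--\eqref{fourth_moment_gamma}, Lyapunov, and a Shorack--Wellner modulus-of-continuity bound, exactly as in Theorems \ref{thm1} and \ref{thm2}. You also correctly identify where the factor $\tfrac12$ relative to Theorem \ref{thm2} comes from: \eqref{detunbound2} involves a single product over the $a_j$, each with Gamma shape $\approx 2(n-j)$ hence $\operatorname{Var}(\log a_{n,j})\approx \tfrac{1}{2(n-j)}$, whereas \eqref{detunbound} sums over pairs $z_{2j-1},z_{2j}$ giving $\approx\tfrac{1}{n-j}$; this halves both the covariance kernel ($g/2$) and the drift ($-r/4$). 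Your Riemann-sum identifications of the limiting covariance $\tfrac12\int_0^{s\wedge t}\frac{(t-x)(s-x)}{1-x}\,dx$ and drift $-\tfrac14\int_0^t\frac{t-x}{1-x}\,dx=-r(t)/4$ are correct, and your remark that the $b_{n,k}$ (and hence the odd-indexed $\delta$'s) play no role since they do not appear in $\det\underline{H}_{2k}$ is accurate. This is essentially the same proof the paper would give, written out.
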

    
%    \begin{rem} \label{simil}
 %    {\rm   Let $ \mathcal{G}^{\mathbb{R}}_1$ and $ \mathcal{G}^{\mathbb{R}}_2$ denote two independent copies 
 %    of the limit process $ \mathcal{G}^{\mathbb{R}}$ in Theorem \ref{thm3}, then 
 %   $$   \mathcal{G}^{[0,\infty)}  \stackrel{d}=  \mathcal{G}^{\mathbb{R}}_1 +  \mathcal{G}^{\mathbb{R}}_2~,
 %   $$
  %  where $ \mathcal{G}^{[0,\infty)} $ is the limiting process from Theorem \ref{thm2}. 
  %  \textbf{Remark streichen? Es ging mir darum dass die Beweise der letzten beiden Sätze fast identisch sind, bei dem ersten ist die Verteilung des
  % Prozesses ``fast'' gleich der Summe zweier unabhängiger Kopien der Verteilungen im zweiten Prozess.}
   %  }
 %   \end{rem}

\section{Large deviations} \label{sec4}
 \def\theequation{4.\arabic{equation}}
 \setcounter{equation}{0}
 Throughout this section we consider large deviation principles (LDP) for 
 the moment space $\mathcal{M}_{2n}([0,1])$. Similar results can be obtained for moment spaces corresponding to unbounded intervals.
    For fixed $k$ the sequence $(H_n^k)_{n\in \en} $ defined in (\ref{hankel_fest}) for 
    a uniformly distributed vector  $ \vec{m}_{2n}$ on the moment space $\mathcal{M}_{2n}([0,1])$   satisfies an LDP 
     with a good rate function. To see this, observe that the sequence of canonical moments $(Y_n)_{n\in \en}  = ((p_{2n, 1}, \ldots, p_{2n, 2k})^T)_{n\in \en} $ satisfies a large deviation principle with good rate function
    \begin{align*}
        I(x) = 2\sum \limits_{i = 1}^{2k}\left( -\log(x_i - x_i^2) - \log(4)\right)
    \end{align*}
    (c.f. \cite{gamloz2004}). As the  function that maps the canonical moments to the logarithms of the Hankel-determinants is obviously continuous,  the contraction principle [Theorem 4.2.1 in \cite{demzeit1998})] shows that $(H_n^k)_{n\in \en} $ satisfies an LDP with a good rate function. However, due to the complicated form of this map it is not possible to explicitly represent the corresponding 
     rate function in terms of standard functions.  \\
 The investigation of  LDP-properties of  the logarithm of the lower Hankel determinant
    with increasing dimension turns out to be substantially more complicated,  and we consider again the
     process $\{  D_{n,2\lfloor  nt \rfloor}   (\vec{m}_{2n})  \}_{t\in [0,1]}$, which 
    has to be normalized differently, that is 
    \begin{align*}
        Z_n(t)  &= -{1\over n}  \big (  D_{n,2\lfloor  nt \rfloor}   (\vec{m}_{2n})   -D_{n, 2\lfloor nt \rfloor}^0 \big )~, %\\
%~ &=      -\frac{1}{n} \Big (\sum \limits_{i = 1}^{\lfloor nt \rfloor} (\lfloor nt \rfloor - i + 1)(\log (q_{n, 2i-2} p_{n, 2i - 1} q_{n, 2i - 1} p_{n, 2i }) + \log(16)) - \lfloor nt \rfloor \log 2 \Big )~,
    \end{align*}
where $q_{n, 0} =1$. 
Let  $\mathcal{S}([0, 1])$ denote the space of all signed regular Borel measures on the interval $[0, 1]$ endowed  
 with the weak-$*$-topology (with ($C([0, 1]), || \cdot||_\infty)$ as predual). Then its (topological) dual space is the space $C([0, 1])$ of all continuous functions on the interval $[0, 1]$. In the following  we interpret the process $Z_n$ as the distribution function of a random measure $\nu_n \in \mathcal{S}([0, 1])$.  To be precise note that the  process   $Z_n$ 
 is piecewise constant with jumps at the points  $ \tfrac{1}{n}, \ldots, \tfrac{n}{n}$. Therefore $\nu_n$ is a linear combination 
 of Dirac-measures and a simple calculation shows that 
        \begin{align} \label{nun} 
            \nu_n = -\frac{1}{n} \sum \limits_{i = 1}^n \Big \{ \sum \limits_{j = 1}^i \log(4q_{2n, 2j - 1}p_{2n, 2j - 1}) + \sum \limits_{j = 1}^{i - 1} \log(4q_{2n, 2j}p_{2n, 2j}) + \log(2p_{2n, 2i})\Big\} \delta_{\frac{i}{n}}~, 
        \end{align}
 where $\delta_t$ denotes the Dirac measure at the point $t \in [0,1]$.
  % We now wish to prove a large deviation principle for the sequence $\nu_n$ by means of Baldi's theorem (cf. Thm 4.5.20 in \cite{demzeit1998}). \\
%   In order to apply Baldi's theorem, we need to calculate 
In order to investigate the large deviation properties of the sequence of random measures $\{ \nu_n\}_{n \in \en}$ we first
derive the limit of the (normalized) logarithmic moment generating function.

    \begin{thm}  \label{lamdarst}
    Let $\nu_n$ denote the random measure defined in (\ref{nun}). 
    For any Riemann-integrable function $f \in l^\infty([0, 1])$ we have 
        \begin{align*}
            \Lambda (f) = \lim \limits_{n \to \infty} \frac{1}{n}\log \mathbb{E}[e^{n \nu_n(f)}] =
            \begin{cases}
                -\int \limits_0^1 \log \big(1 - \frac{G(x)}{2(1-x)}\big) \; dx & K < 2 \\
                \infty & K > 2
            \end{cases}~,
        \end{align*}
where 
    \begin{align*}
        G(x) = \int \limits_x^1 f(t) \; dt ~; \qquad K = \sup \limits_{x \in [0, 1)} \frac{G(x)}{1 - x} ~. %]
    \end{align*}
        It is in general unknown what happens in the case $K = 2$.
    \end{thm}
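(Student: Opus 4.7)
My plan is to use Lemma~\ref{betadist} to factor the moment generating function into a product of one-variable Beta integrals, expand each factor via Stirling, and recognize the result as a Riemann sum.

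Starting from \eqref{nun} and exchanging the order of summation, I would first write
\begin{align*}
n \nu_n(f) &= -\sum_{j=1}^n S_j^{(n)} \log(4 p_{2n,2j-1} q_{2n,2j-1}) - f(1) \log(2 p_{2n,2n}) \\
& \quad -\sum_{j=1}^{n-1} \bigl[ S_{j+1}^{(n)} \log(4 p_{2n,2j} q_{2n,2j}) + f(j/n) \log(2 p_{2n,2j}) \bigr] ,
\end{align*}
where $S_j^{(n)} := \sum_{i=j}^n f(i/n)$ is a discrete analogue of $n\,G((j-1)/n)$. By Lemma~\ref{betadist} the canonical moments are independent, so the expectation factors, and each factor is a ratio of Beta integrals: $\mathbb{E}[p^{-\alpha}(1-p)^{-\beta}] = B(a-\alpha,a-\beta)/B(a,a)$ for $p \sim \beta(a,a)$, finite iff $\alpha,\beta < a$. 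The odd-indexed factor (at $k=2j-1$) is $4^{-S_j^{(n)}} B(a-S_j^{(n)},a-S_j^{(n)})/B(a,a)$ with $a=2n-2j+2$; the even-indexed factor (at $k=2j$, $j<n$) is $4^{-S_{j+1}^{(n)}} 2^{-f(j/n)} B(a-S_j^{(n)},a-S_{j+1}^{(n)})/B(a,a)$ with $a=2n-2j+1$; and the boundary factor $\mathbb{E}[(2 p_{2n, 2n})^{-f(1)}]$ is $O(1)$ and so contributes $o(1)$ to $\Lambda(f)$.

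For $K>2$, pick $x^* \in [0,1)$ with $G(x^*)/(1-x^*) > 2$ and an odd index $j_n \approx n x^*$: then $S_{j_n}^{(n)} > 2n-2j_n+2$ for large $n$, the corresponding Beta integral diverges, and $\Lambda(f)=\infty$. For $K<2$, all Beta arguments stay uniformly positive for large $n$, and I apply Stirling's formula $\log\Gamma(z) = z\log z - z - \tfrac12 \log z + \tfrac12 \log(2\pi) + O(1/z)$. For the odd factor a direct computation gives $\log \mathbb{E}_{2j-1} = -\tfrac12 \log\bigl(1 - S_j^{(n)}/(2n-2j+2)\bigr) + O(1/n)$ uniformly in $j$. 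For the even factor, parametrising the two Beta arguments as $w \pm \delta$ with $w = a - \bar S_j^{(n)}$, $\delta = f(j/n)/2$ and $\bar S_j^{(n)} = (S_j^{(n)} + S_{j+1}^{(n)})/2$, the identity $(w-\delta)\log(1-\delta/w) + (w+\delta)\log(1+\delta/w) = \delta^2/w + O(\delta^4/w^3)$ shows that the leading Stirling contribution $(S_j^{(n)}+S_{j+1}^{(n)})\log 2$ exactly cancels the explicit prefactor $-S_{j+1}^{(n)} \log 4 - f(j/n) \log 2$; the surviving term comes from the $\tfrac12 \log(2\pi(x+y)/(xy))$ correction in Stirling and yields $\log \mathbb{E}_{2j} = -\tfrac12 \log\bigl(1 - \bar S_j^{(n)}/(2n-2j+1)\bigr) + O(1/n)$.

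Dividing by $n$ and summing, each of the two sums $\frac{1}{n}\sum_j \log \mathbb{E}_{2j-1}$ and $\frac{1}{n}\sum_j \log \mathbb{E}_{2j}$ is a Riemann sum with mesh $1/n$ approximating $-\tfrac12 \int_0^1 \log\bigl(1 - G(x)/(2(1-x))\bigr)\,dx$, and their sum produces the claimed $\Lambda(f)$. The main obstacle is the even-factor cancellation: because the leading Stirling terms cancel exactly against the explicit constants, the answer lives at the $\tfrac12\log$ subleading order, so the expansion must be carried to that order with uniform error control --- even though $\delta = f(j/n)/2 = O(1)$ is comparable to the perturbation scale. A secondary uniformity issue arises near $j = n$, where $a$ becomes small; this I would handle by truncating the product at $j \le n - \sqrt{n}$ and bounding the tail contribution directly from boundedness of $f$ and $G$. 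Riemann integrability of $f$ together with continuity of $G$ then ensures convergence of both Riemann sums to the stated integral.
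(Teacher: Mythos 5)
Your proposal follows essentially the same route as the paper: factor the moment generating function using the independence of the canonical moments (Lemma~\ref{betadist}), express each factor as a ratio of Beta/Gamma functions, detect the $K>2$ divergence by locating a Beta moment with a nonpositive argument, and for $K<2$ apply Stirling's asymptotics, observe that the leading-order terms cancel the explicit $\log 2$ prefactors, and identify the surviving $\tfrac{1}{2}\log$-order terms as Riemann sums converging to the stated integral. The only organizational difference is that the paper uses the exact Binet representation $\log\Gamma(x)=(x-\tfrac{1}{2})\log x - x + \tfrac{1}{2}\log(2\pi)+2\phi_0(x)$ with an explicit bound on $\phi_0$ and an $\epsilon$-argument (giving $x_{n,j}+2n-2j\geq (2-K-2\epsilon)(n-j)$) to get uniform control of the remainders all the way to $j=n$, whereas you propose truncating at $j\le n-\sqrt{n}$ and bounding the tail separately --- both work, but the $\epsilon$-estimate is precisely what justifies your assertion that the Beta arguments stay uniformly positive, so you would end up reproducing it in the tail bound anyway.
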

    \begin{proof}
        Interpreting the sequences
        \begin{align*}
            x_{n, j} = -\sum \limits_{i = j}^n f\left(\tfrac{i}{n}\right) \text{ and } y_{n, j} = -\sum \limits_{i = j + 1}^n f\left(\tfrac{i}{n}\right)
        \end{align*}
        as Riemann-sums, we get the approximations
        \begin{align}
            \sup \limits_{j = 1, \ldots, n}\left|G\left(\tfrac{j}{n}\right) + \tfrac{x_{n, j}}{n}\right| \xrightarrow{n \to \infty} 0\label{xj_riemann}~, 
            \sup \limits_{j = 1, \ldots, n}\left|G\left(\tfrac{j}{n}\right) + \tfrac{y_{n, j}}{n}\right| \xrightarrow{n \to \infty} 0 ~.
        \end{align}
        This yields for the logarithm of the moment generating function 
        \begin{align*}
            &\frac{1}{n} \log\left(\mathbb{E}[\exp(n\nu_n(f))]\right) \\
            =& \frac{1}{n} \sum \limits_{j = 1}^n \left(\log\left(\mathbb{E}\left[q_{2n, 2j - 1}^{x_{n, j}}p_{2n, 2j - 1}^{x_{n, j}}\right]\right) + \log\left(\mathbb{E}\left[q_{2n, 2j}^{y_{n, j}}p_{2n, 2j}^{x_{n, j}}\right]\right) + (3x_{n, j} + y_{n, j})\log(2)\right) .
        \end{align*}
     For the determination of the limit we now consider the two  cases $K > 2$ and $K < 2$ separately.  \\
\smallskip
        {\bf (1) } In the case $K > 2$ we choose constants $\delta, C > 0$ such that for all sufficiently large $n$ there exists a $j_n \in \{1, \ldots, n\}$ with $1 - \frac{j_n}{n} > C$ and $G(\tfrac{j_n}{n}) \ge (2 + \delta)(1- \tfrac{j_n}{n})$ (this is possible since the function 
        $G$ is continuous). Choosing another constant $0 < \epsilon < \delta C$ and considering (\ref{xj_riemann}), we get the following approximation for sufficiently large $n$:
        \begin{align*}
            \tfrac{x_{n, j_n}}{n} \le \left|\tfrac{x_{n, j_n}}{n} + G\left(\tfrac{j}{n}\right)\right| - G\left(\tfrac{j_n}{n}\right) \le \epsilon - (2 + \delta)(1 - \tfrac{j_n}{n})~.
        \end{align*}
        Therefore $
            2n - 2j_n + 1 + x_{n, j_n} \le 1 + n(\epsilon - \delta C) < -1,
$
        which yields $\mathbb{E}[q_{n, 2j_n - 1}^{x_{n, j_n}}p_{n, 2j_n - 1}^{x_{n, j_n}}] = \infty$ and the assertion follows. \\
\smallskip
       {\bf (2)}  In the case $K < 2$ we use the formula
        \begin{align*}
            \log (\Gamma(x)) = (x - \tfrac{1}{2}) \log(x) - x + \frac{\log(2\pi)}{2} + 2 \phi_0(x) ~, 
                    \end{align*}
where
             \begin{align} \label{phi0} 
        \phi_0(x) = \int \limits_0^\infty \frac{\arctan \left(\frac{t}{x}\right)}{\exp(2\pi t) - 1} \; dt
        \end{align}
        [cf. (4.3) in \cite{detgam2007}]. Using the representation \eqref{nun} we can show that
        \begin{align} \label{decomp1}
            \frac{1}{n}\mathbb{E}[\exp(n \nu_n(f))]  &=  B_{n,1} +B_{n,2} +B_{n,3} + B_{n,4 }  \\
             & + R(2n - 2j + 2, x_{n, j}, x_{n, j}) + R(2n - 2j + 1, x_{n, j}, y_{n, j}) ~, \nonumber 
        \end{align}
        where 
             \begin{align*}
             B_{n,1} &= - \frac{1}{2n} \sum \limits_{j = 1}^n    \log\left(1 + \tfrac{x_{n, j}}{2n - 2j + 2} \right) ~, \\ 
               B_{n,2} &= - \frac{1}{2n} \sum \limits_{j = 1}^n    \log\left(1 + \tfrac{x_{n, j}}{2n - 2j + 1} \right) ~, \\ 
                B_{n,3} &=  \frac{1}{n} \sum \limits_{j = 1}^n  (2n - 2j + 1 + x_{n,j}) \log\left(1 + \tfrac{-f\left({j}/{n}\right)}{2(2n - 2j + 1) + x_{n, j} + y_{n, j}}\right)~,\\
  B_{n,4} &=  \frac{1}{n} \sum \limits_{j = 1}^n 
          (2n - 2j + 1 + y_{n,j}-\tfrac{1}{2}) \log\left(1 + \tfrac{f\left( {j}/{n}\right)}{2(2n - 2j + 1) + x_{n, j} + y_{n, j}}\right)~,
        \end{align*}
and the remaining two terms are defined by 
        \begin{align*}
            R(a, x, y) = 2(\phi_0(a+x) - \phi_0(a) + \phi_0(a+y) - \phi_0(a)) - 4(\phi_0(2a + x + y) - \phi_0(2a))~. 
        \end{align*}
We now investigate  the terms in this decomposition   separately. The first term $   B_{n,1}$ can be interpreted as Riemann-sum, using (\ref{xj_riemann}), that is 
        \begin{align} \label{b1}
      B_{n,1}  = -\frac{1}{2n } \sum \limits_{j = 1}^n  \log 
            \Big(1-  \frac{G\left(\frac{j}{n}\right)+o(1)}{2\left(1 - \frac{j - 1}{n}\right) + o(1)}\Big) 
            \xrightarrow{n \to \infty} -\frac{1}{2} \int \limits_0^1 \log \Big (1 - \frac{G(x)}{2(1-x)}\Big) \; dx~.
        \end{align}
        Analogously, the second term converges to the same limit, i.e. 
          \begin{align} \label{b2}
      B_{n,2}               \xrightarrow{n \to \infty} -\frac{1}{2} \int \limits_0^1 \log \Big (1 - \frac{G(x)}{2(1-x)}\Big) \; dx~.
        \end{align}
For the the terms $  B_{n,3} $ and $   B_{n,4} $ we  use the  Taylor-approximation $\log(1 + x) = x + O(x^2) \; (x \to 0)$ 
and obtain 
        \begin{align}  \label{b3}
     B_{n,3} & =    -\frac{1}{2n}
                   \sum \limits_{j = 1}^n  \frac{2\left(1-\frac{j}{n}\right) - G\left(\frac{j}{n}\right) + o(1)}{2\left(1-\tfrac{j}{n}\right) - G\left(\frac{j}{n}\right) + o(1)} 
         f\left(\tfrac{j}{n}\right)+ O\Big (\frac{1}{n}\Big) \xrightarrow{n \to \infty} -\frac{G(0)}{2} ~, \\
 \label{b4}
     B_{n,4} &= 
            \xrightarrow{n \to \infty} \frac{G(0)}{2}~,
        \end{align}
 %~ By continuity and (\ref{xj_riemann}), we get
        %~ \begin{align} \label{b5} 
            %~ -\log(2)\frac{x_{n, 1}}{n} \to \log(2) G(0)~,
        %~ \end{align}
        and it remains to show that the last two terms in \eqref{decomp1} are asymptotically negligible. \\
         For this purpose we note  that  the following inequality holds for the function $\phi_0$ defined in \eqref{phi0} [cf.  formula (4.10) in \cite{detgam2007}]
        \begin{align*}
            |\phi_0(a + x) - \phi_0(a)| \le C \frac{|x|}{(a \wedge (a + x))^2} \quad \text{with} \quad C = \int \limits_0^\infty \frac{t}{\exp(2\pi t) - 1} \; dt ~,
        \end{align*}
        where $a > 0, x > -a$.   This gives 
        \begin{align*}
            |R(a, x, y)| \le 2C \Big (\frac{|x|}{(a \wedge (a + x))^2} + \frac{|y|}{(a \wedge (a + y))^2}\Big) + 4C \Big (\frac{|x + y|}{(2a \wedge (2a + x + y))^2}\Big)~,
        \end{align*}
        and  using this inequality to estimate the terms $R(2n - 2j + 2, x_{n, j}, x_{n, j}) $ and $ R(2n - 2j + 1, x_{n, j}, y_{n, j}) $ in \eqref{decomp1}
yields six terms, which have a similar form. 
        For the sake of brevity we will only show exemplarily  the convergence
        \begin{align*}
       D_n :=      \frac{1}{n} \sum \limits_{j = 1}^n \frac{|x_{n, j}|}{((2n - 2j + 2) \wedge (2n - 2j + 2+ x_{n, j}))^2} \xrightarrow{n \to \infty} 0~.
        \end{align*}
      The  other five sums can be approximated in a similar way and the details are omitted. For sufficiently small $\epsilon > 0$ and 
      sufficiently large $n$, we obtain by similar arguments as in the case $K > 2$:
        \begin{align*}
            x_{n, j} &\ge -(\epsilon + K)(n - j) - \epsilon j ~,\\
            2n - 2j + x_{n, j} &\ge (2 - K - 2\epsilon)(n - j)~.
        \end{align*}
        Choosing $\delta = \min\{2 - K - 2\epsilon, 2\} > 0$, we get the inequalities
        \begin{align*}
            (2n - 2j + 2) \wedge (2n - 2j + 2 + x_{n, j}) &\ge \delta(n - j + 1) ~,\\
            |x_{n, j}| \le n(|G(\tfrac{j}{n})| + \epsilon) \le ||f||_\infty (n - j + 1) + n \epsilon ~.
        \end{align*}
        This yields
        \begin{align} \label{Dn} 
   %          &\frac{1}{n} \sum \limits_{j = 1}^n \frac{|x_{n, j}|}{((2n - 2j + 2) \wedge (2n - 2j + 2+ x_{n, j}))^2} \\
   \limsup_{n\to \infty}         D_n   ~\le ~
     \limsup_{n\to \infty}  \Big\{ 
     \frac{||f||_\infty}{\delta^2 n} \sum \limits_{j = 1}^n \frac{1}{n - j + 1} + \frac{\epsilon}{\delta^2} \sum \limits_{j = 1}^n \frac{1}{(n - j + 1)^2} \Big\} ~=~
  %    \xrightarrow{n \to \infty} 
     \frac{\epsilon}{\delta^2} \sum \limits_{j = 1}^\infty \frac{1}{j^2} ~.
           %  \xrightarrow{\epsilon \searrow 0} 0
        \end{align}
        Considering the limit ${\epsilon \searrow 0} $ on the right hand side of \eqref{Dn} we obtain 
that the last two terms in \eqref{decomp1}  converge to $0$, and the assertion follows from \eqref{b1} - \eqref{b4}.
   \end{proof}
   \begin{rem} {\rm 
        Note that for the application in Lemma \ref{exp_tight} and Proposition \ref{ldp_prozess}, it would be suffivient  to prove the preceeding theorem only for functions $f \in C([0, 1])$. However, we have chosen to prove it more generally for any Riemann-integrable function $f$, as this allos us to apply the formula for the limit in the proof of Theorem \ref{ldp_fixed}.
        }
    \end{rem}

    \begin{lem}\label{exp_tight}
    The sequence 
        $(\nu_n)_{n\in \en}$ of  random measures
        defined by (\ref{nun})  is  exponentially tight.
    \end{lem}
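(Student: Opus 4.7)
The plan is to invoke the Banach--Alaoglu theorem: since $\mathcal{S}([0,1])$ is the topological dual of $(C([0,1]),\|\cdot\|_\infty)$, the norm-closed balls $B_R = \{\mu \in \mathcal{S}([0,1]) : \|\mu\|_{TV} \leq R\}$ are compact in the weak-$*$-topology. Exponential tightness therefore reduces to exhibiting, for every $\alpha>0$, some radius $R>0$ with $\limsup_n \tfrac{1}{n}\log \mathbb{P}(\|\nu_n\|_{TV} > R) \leq -\alpha$.

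The first step is a one-sided sign estimate that reduces the total-variation norm to the single linear functional $\nu_n([0,1])$. By the AM--GM inequality $4 q_{2n,k}\, p_{2n,k} \leq 1$ for every canonical moment, so the first two summations in the bracket of \eqref{nun} are termwise non-positive; combined with $\log(2p_{2n,2i}) \leq \log 2$, the whole bracket is bounded above by $\log 2$, and hence each atom mass $m_i := \nu_n(\{i/n\})$ satisfies the one-sided bound $m_i \geq -(\log 2)/n$. Writing $|m_i| \leq m_i + 2(\log 2)/n$ (trivial if $m_i \geq 0$, otherwise by the lower bound) and summing over $i$ yields
$$\|\nu_n\|_{TV} \; = \; \sum_{i=1}^n |m_i| \; \leq \; \nu_n([0,1]) + 2\log 2.$$

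The second step is a Chernoff bound based on Theorem~\ref{lamdarst} applied to the constant function $f \equiv 1$, which is Riemann-integrable with $G(x)=1-x$ and $K=1<2$, so $\Lambda(1) = -\int_0^1 \log(1/2)\,dx = \log 2 < \infty$. Markov's inequality yields $\mathbb{P}(\nu_n([0,1]) > R) \leq e^{-nR}\,\mathbb{E}[e^{n\nu_n([0,1])}]$, whence $\limsup_n \tfrac{1}{n}\log \mathbb{P}(\nu_n([0,1]) > R) \leq -R + \log 2$. Given $\alpha>0$ it then suffices to pick $R = \alpha + \log 2$ and take $K_\alpha := B_{R+2\log 2}$; the reduction of the previous paragraph gives $\{\|\nu_n\|_{TV} > R+2\log 2\} \subseteq \{\nu_n([0,1]) > R\}$, so $\mathbb{P}(\nu_n \notin K_\alpha)$ decays at the required exponential rate.

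I do not foresee a serious obstacle. Both Banach--Alaoglu and the explicit evaluation of $\Lambda$ at a constant function are routine. The only genuinely delicate point is the reduction from the total-variation norm to the scalar quantity $\nu_n([0,1])$, which relies on the one-sided bound $m_i \geq -(\log 2)/n$ supplied by AM--GM; attempting instead to bound $\|\nu_n\|_{TV}$ directly via moment generating function estimates on each individual canonical moment would lead to a considerably messier argument.
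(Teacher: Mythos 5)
Your proof is correct and follows essentially the same route as the paper: both invoke Banach--Alaoglu for weak-$*$ compactness of total-variation balls, both observe via AM--GM that $\nu_n$ is positive up to a uniform $O(\log 2/n)$ correction per atom (the paper packages this as a positive measure $\nu_n'$ with $\nu_n = \nu_n' - \tfrac{\log 2}{n}\sum\delta_{i/n}$, you bound each atom mass directly), and both finish with a Chernoff bound using Theorem~\ref{lamdarst} at $f\equiv 1$. The reduction $\|\nu_n\|_{TV}\le \nu_n([0,1]) + 2\log 2$ you derive is precisely the inequality $\sup_{\|f\|\le 1}\nu_n(f)\le\nu_n'(1)+\log 2 = \nu_n(1)+2\log 2$ appearing implicitly in the paper's argument.
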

    \begin{proof}
        By the Banach-Alaoglu theorem the set
        \begin{align*}
            K_\alpha = \Big \{\mu \in \mathcal{S}([0, 1]) \; \Big |\; \sup \limits_{\substack{f \in C([0, 1]) \\ ||f|| \le 1}} \mu(f) \le \alpha\Big \}
        \end{align*}
        is compact (note that we endowed $\mathcal{S}([0, 1])$ with the weak-$*$-topology). We define the modified measure
        \begin{align*}
            \nu_n' = -\frac{1}{n} \sum \limits_{i = 1}^n \Big \{ \sum \limits_{j = 1}^i \log(4q_{2n, 2j - 1}p_{2n, 2j - 1}) + \sum \limits_{j = 1}^{i - 1} \log(4q_{2n, 2j}p_{2n, 2j}) + \log(p_{2n, 2i})\Big\} \delta_{\frac{i}{n}} ~.
        \end{align*}
        Observing $\nu_n = \nu_n' - \frac{\log(2)}{n} \sum \limits_{i = 1}^n \delta_{\frac{i}{n}}$ we can see $\nu_n(f) \le \nu_n'(f) + \log(2)$ for all $f \in C([0, 1])$ with $||f||_\infty \le 1$. Since $\nu_n'$ is a positive measure, we get by Markov's inequality
        \begin{align*}
            \frac{1}{n} \log \mathbb{P}(\nu_n \in K_\alpha^c) &\le \frac{1}{n} \log \mathbb{P}\Big (\sup \limits_{\substack{f \in C([0, 1]) \\ ||f|| \le 1}}\nu_n'(f) > \alpha - \log(2) \Big) \le \frac{1}{n} \log \mathbb{E}[\exp(n \nu_n'(1))] - \alpha + \log(2) \\
            &= \frac{1}{n} \log \mathbb{E}[\exp(n (\nu_n(1) + \log(2)) ] - \alpha + \log(2) \xrightarrow{n \to \infty} \Lambda(1) - \alpha + 2\log(2)~,
        \end{align*}
        which yields the assertion.
    \end{proof}

    \begin{prop} \label{ldp_prozess}
        Let $\Lambda^*$ be the Fenchel-Legendre transform of $\Lambda$ and let $E$ denote the set of all exposed points of $\Lambda^*$ which have an exposing hyperplane $\lambda$ that satisfies
        \begin{align*}
            \lim \limits_{n \to \infty} \frac{1}{n} \log \mathbb{E}[\exp(\nu_n(n \lambda)] \text{ exists and } \Lambda(\gamma \lambda) < \infty \text{ for some } \gamma > 1.
        \end{align*}
        Then
        \begin{align*}
            - \inf \limits_{x \in E \cap \Gamma^\circ} \Lambda^*(x) \le \liminf \limits_{n \to \infty} \frac{1}{n} \log \mathbb{P}(\nu_n \in \Gamma) \le \limsup \limits_{n \to \infty} \frac{1}{n} \log \mathbb{P}(\nu_n \in \Gamma)\le -\inf \limits_{x \in \overline{\Gamma}} \Lambda^*(x)
        \end{align*}
        for all measurable sets $\Gamma \subset \mathcal{S}([0, 1])$.
    \end{prop}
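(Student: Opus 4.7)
The statement is precisely the conclusion of an abstract Gärtner-Ellis theorem applied in the locally convex topological vector space $\mathcal{S}([0,1])$ equipped with the weak-$*$-topology, whose continuous dual is $C([0,1])$. The plan is therefore to verify the two standard hypotheses of such a theorem (e.g.\ Corollary~4.6.14 in \cite{demzeit1998}): existence and lower semicontinuity of the limiting logarithmic moment generating function on the dual, together with exponential tightness of $(\nu_n)_{n\in\en}$.

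For the first hypothesis, Theorem~\ref{lamdarst} supplies the limit
\[
\Lambda(f) = \lim_{n\to\infty} \frac{1}{n}\log\mathbb{E}[\exp(n\nu_n(f))]
\]
for every $f\in C([0,1])$, since continuous functions on $[0,1]$ are Riemann-integrable. Convexity of $\Lambda$ on $C([0,1])$ follows from Hölder's inequality applied to the pre-limit functions $f\mapsto \frac{1}{n}\log\mathbb{E}[\exp(n\nu_n(f))]$ and the fact that pointwise limits of convex functions are convex. Lower semicontinuity in the sup-norm follows from the explicit integral representation: when $f_k\to f$ uniformly, the tails $G_k(x)=\int_x^1 f_k(t)\,dt$ converge uniformly to $G(x)$, so Fatou's lemma applied to the integrand $-\log(1-G(x)/(2(1-x)))$ yields the required inequality on the finite part, while any sequence with $K_k<2$ converging to some $f$ with $K\ge 2$ has $\Lambda(f_k)\to\infty$, consistent with $\Lambda(f)=+\infty$. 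The second hypothesis, exponential tightness, is already provided by Lemma~\ref{exp_tight}.

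With these two ingredients in hand, the abstract Gärtner-Ellis theorem yields the upper bound $\limsup_{n\to\infty} \frac{1}{n}\log\mathbb{P}(\nu_n\in\Gamma)\le -\inf_{x\in\overline\Gamma}\Lambda^*(x)$ for every measurable $\Gamma\subset\mathcal{S}([0,1])$. The matching lower bound $-\inf_{x\in E\cap\Gamma^\circ}\Lambda^*(x)\le \liminf_{n\to\infty} \frac{1}{n}\log\mathbb{P}(\nu_n\in\Gamma)$ is obtained by the classical change-of-measure argument at each exposed point $x\in E$: the associated exposing hyperplane $\lambda\in C([0,1])$ is used to tilt the law of $\nu_n$ by $\exp(n\nu_n(\lambda) - n\Lambda(\lambda))$, and the two supplementary conditions (existence of the limit defining $\Lambda(\lambda)$ and $\Lambda(\gamma\lambda)<\infty$ for some $\gamma>1$) are precisely what guarantees that the tilted measures remain probability measures and that the subsequent application of Chebyshev's inequality in the tilted model produces the correct exponential rate.

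The main obstacle I anticipate is the careful verification of lower semicontinuity of $\Lambda$ across the boundary $\{K=2\}$, where Theorem~\ref{lamdarst} gives no information. This is handled by declaring $\Lambda(f)=+\infty$ whenever $K\ge 2$ and invoking the general principle (see Lemma~4.1.21 of \cite{demzeit1998}) that any pointwise limit of convex functions admits a lower semicontinuous convex regularization that coincides with it on the interior of its effective domain; since the latter regularization has the same Fenchel-Legendre transform as $\Lambda$ itself, the conclusion of the proposition is unaffected.
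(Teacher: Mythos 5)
Your proposal has the right ingredients (exponential tightness from Lemma~\ref{exp_tight}, existence of the limit $\Lambda$ from Theorem~\ref{lamdarst}) but attributes the result to the wrong abstract theorem, and this misattribution generates unnecessary work and a genuine gap. The paper's proof is a one-line citation of \emph{Baldi's theorem} (Theorem~4.5.20 in \cite{demzeit1998}), not the abstract G\"artner--Ellis theorem (Corollary~4.6.14). The distinction matters: the conclusion you are asked to prove restricts the lower bound to the set $E$ of exposed points whose exposing hyperplanes satisfy the two auxiliary conditions, and that is precisely the form of Baldi's conclusion. G\"artner--Ellis, by contrast, delivers a full LDP with lower bound on all of $\Gamma^\circ$, but only under the additional hypotheses that the limiting cumulant generating function is lower semicontinuous and essentially smooth (Gateaux differentiable on the interior of its effective domain) --- neither of which is established, or needs to be established, for the present proposition.

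Because you reach for G\"artner--Ellis, you spend most of the argument trying to prove lower semicontinuity of $\Lambda$, including across the problematic boundary $\{K=2\}$ where Theorem~\ref{lamdarst} is silent. This effort is not needed for Baldi's theorem, which works directly with the limsup $\bar\Lambda(\lambda)=\limsup_n \tfrac1n\log\mathbb{E}[\exp(n\nu_n(\lambda))]$ (always defined in $(-\infty,\infty]$) and imposes the \emph{existence of the limit} and the integrability condition $\bar\Lambda(\gamma\lambda)<\infty$ only at the exposing hyperplanes of points in $E$, exactly as in the statement. Moreover your attempted patch --- ``declaring'' $\Lambda(f)=\infty$ for $K\ge 2$ and invoking the lsc convex regularization --- does not verify essential smoothness, which Corollary~4.6.14 also requires and which would be very hard to check here (and indeed is the reason the paper's Theorem~\ref{ldp_fixed} handles the fixed-$t$ case separately). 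The clean route is: cite Baldi's theorem, check exponential tightness via Lemma~\ref{exp_tight}, note that the limsup $\bar\Lambda$ is well-defined on $C([0,1])$ (by Theorem~\ref{lamdarst} off the critical set, and trivially as a limsup elsewhere), and observe that the set $E$ in the proposition is defined precisely so that Baldi's hypotheses are built in.
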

    \begin{proof}
        This follows directly from  Baldi's theorem [c.f. Theorem 4.5.20  in \cite{demzeit1998}].
    \end{proof}
   \medskip
   
    The main difficulty in proving an LDP for the process $\{Z_n(t)\}_{t \in [0,1]}$  consists in  the fact that an explicit representation of the Fenchel-Legendre transform $\Lambda^*$
    is not available. This makes it difficult to eliminate the set $E$ in the lower bound in Theorem \ref{ldp_prozess}. On the other hand - in contrast to the LDP for the process  $\{Z_n(t)\}_{t \in [0,1]}$ - the LDP for the random variable $Z_n(t)$ with a fixed $t$ can be 
    established.
    
    \begin{thm}\label{ldp_fixed}
        For a fixed $t \in (0, 1]$ the sequence $(Z_n(t))_{n\in \en} $ satisfies a large deviation principle with good rate function
        \begin{align*}
            \Lambda^*(x) = \sup \limits_{\lambda < \frac{2}{t}}\Big \{\lambda x + \int_0^t \log \left(1 - \tfrac{\lambda(t-y)}{2(1-y)}\right) \; dy\Big\}
        \end{align*}
    \end{thm}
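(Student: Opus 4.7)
The plan is to invoke the one–dimensional Gärtner–Ellis theorem, using Theorem \ref{lamdarst} to evaluate the limiting logarithmic moment generating function of $Z_n(t)$. The first step is to recognize that $Z_n(t)$ is precisely $\nu_n(\mathbb{1}_{(0,t]})$. Setting $k=\lfloor nt\rfloor$ and interchanging summations in \eqref{nun} gives
\[
-n\,\nu_n((0,t])=\sum_{j=1}^{k}(k-j+1)\log(4q_{2n,2j-1}p_{2n,2j-1})+\sum_{j=1}^{k-1}(k-j)\log(4q_{2n,2j}p_{2n,2j})+\sum_{j=1}^{k}\log(2p_{2n,2j}),
\]
so absorbing the trailing sum into the middle one produces, after bookkeeping, the constant $(2k^2+k)\log 2=-D_{n,2k}^{0}$ together with precisely the log–$p$/log–$q$ expansion of $D_{n,2k}$ read off from \eqref{detbound}. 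Thus $Z_n(t)=\nu_n(\mathbb{1}_{(0,t]})$.

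The second step is an immediate application of Theorem \ref{lamdarst} to the Riemann–integrable function $f=\lambda\mathbb{1}_{(0,t]}$. A direct computation gives $G(x)=\lambda(t-x)_{+}$ and $K=\sup_{x\in[0,1)}G(x)/(1-x)=(\lambda t)\vee 0$, so that $K<2\Leftrightarrow\lambda<2/t$ and $K>2\Leftrightarrow\lambda>2/t$. Consequently
\[
\Lambda_t(\lambda):=\lim_{n\to\infty}\tfrac{1}{n}\log\mathbb{E}[e^{n\lambda Z_n(t)}]=\begin{cases}-\displaystyle\int_{0}^{t}\log\!\Bigl(1-\tfrac{\lambda(t-y)}{2(1-y)}\Bigr)dy,&\lambda<2/t,\\[1ex]+\infty,&\lambda>2/t,\end{cases}
\]
with the boundary value $\lambda=2/t$ completed by lower semicontinuity of the convex function $\Lambda_t$.

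To invoke Gärtner–Ellis (Theorem 2.3.6 in \cite{demzeit1998}) one must check essential smoothness and lower semicontinuity. Convexity of $\Lambda_t$ is automatic, which yields lower semicontinuity and, together with differentiation under the integral sign, continuous differentiability on $(-\infty,2/t)$ with
\[
\Lambda_t'(\lambda)=\int_{0}^{t}\frac{t-y}{2(1-y)-\lambda(t-y)}\,dy.
\]
The origin lies in the interior of $\mathcal{D}_{\Lambda_t}$, so the only substantive point is steepness at the finite boundary $\lambda=2/t$: writing $\lambda=2/t-\epsilon$ one finds $2(1-y)-\lambda(t-y)=\tfrac{2(1-t)}{t}y+\epsilon(t-y)$, whose reciprocal near $y=0$ behaves like $\tfrac{t}{2(1-t)y+\epsilon t^{2}}$ and yields $\Lambda_t'(\lambda)\asymp\tfrac{t^{2}}{2(1-t)}|\log\epsilon|\to\infty$ for $t<1$; for $t=1$ the denominator collapses to $(2-\lambda)(1-y)$ and $\Lambda_t'(\lambda)=(2-\lambda)^{-1}\to\infty$. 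With essential smoothness in hand, Gärtner–Ellis delivers the LDP with good rate function $\Lambda_t^{*}(x)=\sup_{\lambda}\{\lambda x-\Lambda_t(\lambda)\}$, which coincides with the stated $\Lambda^{*}$ since the supremum over $\lambda\ge 2/t$ contributes nothing. The combinatorial identification of Step 1 and the invocation of Theorem \ref{lamdarst} in Step 2 are routine; the genuinely delicate piece is the steepness analysis, which requires separating the regimes $t<1$ (a logarithmic blow-up from a non-integrable $1/y$ singularity) and $t=1$ (a degenerate denominator of a different flavour).
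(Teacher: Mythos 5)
Your Steps 1 and 2 are sound: the identification $Z_n(t)=\nu_n(\mathbbm{1}_{(0,t]})$, the computation $G(x)=\lambda(t-x)_+$, $K=(\lambda t)\vee 0$, and the resulting formula for $\Lambda_t(\lambda)$ when $\lambda\neq 2/t$ all check out, and your steepness calculation at $\lambda\nearrow 2/t$ (logarithmic blow-up for $t<1$, a $1/(2-\lambda)$ pole for $t=1$) is correct. The difficulty is your treatment of the boundary point $\lambda=2/t$: you write that the value there is ``completed by lower semicontinuity of the convex function $\Lambda_t$,'' but this is a statement about what you would like $\Lambda_t(2/t)$ to be, not a proof that the sequence $\tfrac{1}{n}\log\mathbb{E}[e^{(2/t)nZ_n(t)}]$ actually converges to that value. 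Theorem~\ref{lamdarst} explicitly declines to say anything when $K=2$ (``It is in general unknown what happens in the case $K=2$''), and the Gärtner--Ellis theorem (Theorem 2.3.6 in Dembo--Zeitouni) requires, as a standing hypothesis, that the limiting cumulant generating function exists as an extended real number \emph{for every} $\lambda$. Convexity gives $\liminf_n\Lambda_n(2/t)\geq\lim_{\lambda\nearrow 2/t}\Lambda_t(\lambda)$ but provides no upper control, so the limit might fail to exist entirely; you cannot just declare it.

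The paper's proof deliberately routes around exactly this gap. It invokes Baldi's theorem (Theorem~\ref{ldp_prozess}), whose upper bound uses only a $\limsup$ and whose lower bound requires the limit only along exposing hyperplanes, which live in the interior of $\mathcal{D}_{\Lambda_t}$ and never touch $\lambda=2/t$. The remaining task is then to remove the exposed-point restriction $E_t$ from the lower bound; the paper does this by replacing $\Lambda_t$ with the function $\tilde\Lambda_t$ that equals $\Lambda_t$ off the boundary and the left limit at $\lambda=2/t$. Since $\Lambda_t$ and $\tilde\Lambda_t$ have the same Fenchel--Legendre transform, and $\tilde\Lambda_t$ \emph{is} essentially smooth and lower semicontinuous, the argument from the proof of Gärtner--Ellis can be adapted to conclude. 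Your direct Gärtner--Ellis route is, in fact, acknowledged in the paper's remark as a viable alternative, but it requires an independent computation of the boundary limit (the paper points to Stirling's approximation); without doing that computation, your proof has a genuine gap precisely where the paper flags one.
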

    \begin{proof}
        We will again apply Baldi's theorem. To calculate the normalized cumulant generating function of $Z_n(t)$, note that
        \begin{align*}
            \lambda Z_n(t) = \nu_n(\lambda I\{\cdot \le t\})~,
        \end{align*}
and Theorem \ref{lamdarst}        yields
        \begin{align*}
            \Lambda_t(\lambda) &= \lim \limits_{n \to \infty} \frac{1}{n} \log \mathbb{E}[\exp(n \lambda Z_n(t))]\\
            &  = \Lambda(\lambda I\{ \cdot \le t\}) = \begin{cases}
                - \int_0^t \log \left(1 - \frac{\lambda(t-y)}{2(1-y)}\right) \; dy & \lambda < \frac{2}{t} \\
                \infty & \lambda > \frac{2}{t}
            \end{cases} .
        \end{align*}
 It now follows by similar arguments as given in the proof of Lemma \ref{exp_tight} that the sequence $( Z_n(t)) _{n \in \en} $ 
 is exponentially tight (note that we can use the euclidean topology on $\mathbb{R}$ because the interval 
 $[0, \alpha]$ is compact) and Baldi's theorem yields an analogue of the
  inequality in Theorem \ref{ldp_prozess}, where the set $E$ has
  to be replaced by an analogue set $E_t$. It remains to prove that the lower bound remains correct if one removes the set $E_t$.\

        In order to see this, we define the new function
        \begin{align*}
            \tilde \Lambda_t: 
                \begin{cases}
                \mathbb{R} &\to (-\infty, \infty] \\
            \lambda &\mapsto 
            \begin{cases}
                \Lambda_t(\lambda) & \mbox{ if }  \lambda \ne \frac{2}{t} \\
                \lim \limits_{\epsilon \searrow 0} \Lambda_t(\lambda - \epsilon) & \mbox{ if }  \lambda = \frac{2}{t}
            \end{cases}
            \end{cases}
        \end{align*}
        $\Lambda_t$ and $\tilde \Lambda_t$ have the same Fenchel-Legendre transform and it is therefore sufficient  to prove
        \begin{align} \label{eq_gt}
            \inf \limits_{x \in E_t \cap F} \tilde \Lambda_t^*(x) = \inf \limits_{x \in F} \tilde \Lambda_t^*(x)
        \end{align}
        for all open sets $F \subset \mathbb{R}$. It is easy to see that $\tilde \Lambda_t$ is an essentially smooth function and the identity (\ref{eq_gt}) follows by an adaptation of the arguments in the proof of 
        the Gärtner-Ellis  theorem [Theorem  2.3.6 in \cite{demzeit1998}]. By Lemma 1.2.18 in the same reference 
        the rate function $\Lambda^*$ is a good rate function, which yields the assertion.
        %~ The assertion now follows by an argumentation similar to the proof of theorem 2.3.6 (Gärtner-Ellis) in \cite{demzeit1998}. A careful calculation shows that $\Lambda(\lambda)$ is essentially smooth and the proof of the Gärtner-Ellis-theorem can be adapted to thi
    \end{proof}
    \begin{rem} {\rm 
        It is possible also to prove the preceeding theorem more directly by an application of the Gärtner-Ellis theorem, as the limit of $\frac{1}{n} \log \mathbb{E}[e^{n \lambda Z_n(t)}]$ can be calculated using Stirling's approximation. However, these calculations would essentially be a repetition of the calculations done in the proof of Theorem 
        \ref{lamdarst}, with $f(\cdot)$ replaced by $\lambda I\{ \cdot \le t \}$. The present proof is shorter and given here  for the sake of brevity.
        }
    \end{rem}
    
    \medskip 
    Our final result specializes Theorem \ref{ldp_fixed} to the case $t=1$, where the rate function can be determined explicitly. 
    The     proof    follows by a straightforward calculation  of  $\Lambda_1(\lambda)$ and its convex conjugate.
 
    \begin{cor} The sequence 
        $(Z_n(1))_{n\in \en} $ satisfies an LDP with good rate function
        \begin{align*}
            I(x) = 
            \begin{cases}
                2x - 1 - \log (2x) & x > 0 \\
                \infty &  \text{else}
            \end{cases}.
        \end{align*}
    \end{cor}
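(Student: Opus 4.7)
The plan is simply to apply Theorem \ref{ldp_fixed} at $t=1$, where the formula for the cumulant generating function collapses dramatically, and then to compute its Fenchel--Legendre transform explicitly. First I would observe that when $t=1$ the integrand in Theorem \ref{ldp_fixed} simplifies, since $\frac{\lambda(1-y)}{2(1-y)} = \lambda/2$ is constant in $y$. Hence
\begin{equation*}
\Lambda_1(\lambda) = -\int_0^1 \log\!\Big(1 - \tfrac{\lambda}{2}\Big)\,dy = -\log\!\Big(1 - \tfrac{\lambda}{2}\Big) \qquad (\lambda < 2),
\end{equation*}
and $\Lambda_1(\lambda) = \infty $ for $\lambda \ge 2$ (noting that the limit from the left at $\lambda = 2$ is $+\infty$).

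Next I would compute the convex conjugate $\Lambda_1^*(x) = \sup_{\lambda < 2}\{\lambda x + \log(1-\lambda/2)\}$. For $x>0$ the supremum is attained at the interior critical point obtained from $x = \frac{1}{2-\lambda}$, that is $\lambda^* = 2 - 1/x \in (-\infty,2)$; substituting back gives
\begin{equation*}
\Lambda_1^*(x) = (2 - 1/x)x + \log\!\big(\tfrac{1}{2x}\big) = 2x - 1 - \log(2x).
\end{equation*}
For $x \le 0$ the objective $\lambda x + \log(1-\lambda/2)$ is unbounded above as $\lambda \to -\infty$ (both terms tend to $+\infty$ when $x<0$; when $x=0$ the logarithmic term alone already diverges), so $\Lambda_1^*(x) = \infty$. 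This matches the stated rate function $I(x)$.

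Finally I would verify that the LDP delivered by Theorem \ref{ldp_fixed} actually takes this form: since $\Lambda_1$ is smooth and strictly convex on its effective domain $(-\infty,2)$ with $\Lambda_1'(\lambda) \to \infty$ as $\lambda \uparrow 2$, every $x>0$ is exposed by its unique optimizing $\lambda^*$, and the conditions on the exposing hyperplane in Theorem \ref{ldp_fixed} (namely existence of the limit defining $\Lambda_1$ at $\lambda^*$ and finiteness of $\Lambda_1(\gamma \lambda^*)$ for some $\gamma>1$) are trivially verified because $\lambda^*<2$. Goodness of $I$ is immediate from $I(x) \to \infty$ as $x \downarrow 0$ or $x \to \infty$. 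There is no real obstacle here; the only mild subtlety is handling the boundary $\lambda = 2$, but since $x=0$ lies outside the effective domain of $I$ this does not affect the result.
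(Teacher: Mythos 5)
Your proposal is correct and matches the paper's approach: the paper simply states that the corollary ``follows by a straightforward calculation of $\Lambda_1(\lambda)$ and its convex conjugate,'' which is exactly what you carry out, specializing Theorem \ref{ldp_fixed} to $t=1$ so that $\Lambda_1(\lambda) = -\log(1-\lambda/2)$ for $\lambda<2$ and then computing the Fenchel--Legendre transform. The final paragraph about exposed points is redundant, since Theorem \ref{ldp_fixed} already delivers a full LDP with a good rate function, but it is not wrong.
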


\appendix
\section{Auxiliary results} \label{appendix}
 \def\theequation{A.\arabic{equation}}
 \setcounter{equation}{0}

In the proof of the results we make frequent use of the following approximations, which can
   can be derived from the approximations given in \cite{detgam2007}. Throughout this section
  $C$ denotes a positive constant.
    \begin{align}
        \Big |\mathbb{E}\big [\tilde \xi_{n, i}\big ] + 4\log(2) + \frac{1}{2(n - i + 1)}\Big | &\le \frac{C}{(2n - 2i + 1)^2} \label{erwartung} \\
        \Big |\operatorname{Var}\big (\tilde \xi_{n, i}\big ) - \frac{1}{4(n - i + 1)^2}\Big|  &\le \frac{C}{(2n - 2i + 1)^3} \label{var_genau} \\
        |\mathbb{E}[\log(q_{n, i})] + \log(2)| &\le \frac{C}{n - i + 1} \label{erwartung_einfach} \\
        \operatorname{Var}\big (\tilde \xi_{n, i}\big ) &\le \frac{C}{(n - i + 1)^2} \label{var_grob} \\
        \mathbb{E}\big [\big(\tilde \xi_{n, i} - \mathbb{E}[\tilde \xi_{n, i}]\big)^4\big] &\le \frac{C^2}{(n - i + 1)^4} \label{viertes_moment}
    \end{align}
    Also, a direct estimate of the occurring integrals show for $i < 2n$:
    \begin{align}
        \mathbb{E}\left[|\log(q_{2n, i})|^k\right] \le 3\sup \limits_{x \in [0, 1]} |\log(x)|^k x < \infty \label{moment_grob}
    \end{align}
    Lastly, one can prove by differentiation under the integral that for a random variable $X \sim \beta(a, b)$
    \begin{align}
        \operatorname{Var}(\log(X)) = \psi_1(a) - \psi_1(a + b) \label{var_trigamma}
    \end{align}
    where $\psi_1(x) = \frac{d^2}{dx^2} \log(\Gamma(x)) = x^{-1} + O(x^{-2}) \quad (x \to \infty)$ denotes the trigamma function. \\ \\

    We also need to approximate the moments of gamma-distributed random variables. 
    Using the notation $d_i = \frac{\Gamma^{(i)}(k)}{\Gamma(k)}$ we can see that
    \begin{align*}
        \frac{d}{dk}\log(\Gamma(k)) &= d_1 ~=~ \log(k) - \frac{1}{2k} + O(k^{-2}) \\
        \frac{d^2}{dk^2} \log(\Gamma(k)) &= d_2 - d_1^2  ~=~ \frac{1}{k} +  \frac{1}{2k^2} + O(k^{-3}) \\
        \frac{d^4}{dk^4} \log(\Gamma(k)) &= d_4 - 3d_2^2 - 6d_1^4 - 4 d_3 d_1 + 12 d_1^2d_2 ~=~  O(k^{-3})
    \end{align*}
    where the first part of the equations follows from formally differentiating the term, while the second part follows from the approximations 
    of the polygamma functions in \cite{abrsteg1964}. If $X \sim \gamma(k, 1)$, then for $Y = \log(X)$ the following equations hold
    \begin{align}
        \mathbb{E}[Y] &= d_1 = \log(k) - \frac{1}{2k} + O(k^{-2}) \label{mean_gamma}\\
        \operatorname{Var}(Y) &= d_2 - d_1^2  = \frac{1}{k} +  \frac{1}{2k^2} + O(k^{-3}) \label{variance_gamma}\\
        \mathbb{E}[(Y-\mathbb{E}[Y])^4] &= d_4 - 4 d_1d_3 + 6d_1^2d_2 - 3d_1^4 = 3(d_2 - d_1^2)^2 + O(k^{-3})\label{fourth_moment_gamma} \\
            &= \frac{3}{k^2} + O(k^{-3}) \notag
    \end{align}

    \begin{lem} \label{conv_beta}
        Let $X_n \sim \beta(n, n)$. Then 
        \begin{align*}
            \sqrt{n}\left(X_n - \frac{1}{2}\right) \xrightarrow{d} \mathcal{N}(0, \tfrac{1}{8})
        \end{align*}
    \end{lem}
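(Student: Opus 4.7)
The cleanest route is via the Gamma-quotient representation of the Beta distribution. Let $U_n,V_n$ be independent with $U_n,V_n\sim\gamma(n,1)$; then $X_n\stackrel{d}{=} U_n/(U_n+V_n)$. Since $U_n$ and $V_n$ are each sums of $n$ i.i.d.\ $\mathrm{Exp}(1)$ random variables (which have mean $1$ and variance $1$), the classical Lindeberg--L\'evy CLT yields
\begin{equation*}
\frac{U_n-n}{\sqrt n}\xrightarrow{d}\mathcal N(0,1),\qquad \frac{V_n-n}{\sqrt n}\xrightarrow{d}\mathcal N(0,1),
\end{equation*}
and by independence of $U_n,V_n$ the joint convergence to two independent standard normals holds as well.

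Next I would use the algebraic identity
\begin{equation*}
\sqrt n\,\bigl(X_n-\tfrac12\bigr)=\sqrt n\cdot\frac{U_n-V_n}{2(U_n+V_n)}=\frac{(U_n-V_n)/\sqrt n}{2(U_n+V_n)/n}.
\end{equation*}
The numerator $(U_n-V_n)/\sqrt n$ converges in distribution to $\mathcal N(0,2)$ by the continuous mapping theorem applied to the joint convergence above, while the law of large numbers gives $(U_n+V_n)/n\xrightarrow{P}2$, so the denominator tends to $4$ in probability. Slutsky's theorem then delivers
\begin{equation*}
\sqrt n\bigl(X_n-\tfrac12\bigr)\xrightarrow{d}\frac{\mathcal N(0,2)}{4}=\mathcal N\!\left(0,\tfrac{1}{8}\right),
\end{equation*}
which is the claim.

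There is no genuine obstacle here; the only small thing to verify is that the variance comes out to $1/8$, which is consistent with the exact Beta variance $\mathrm{Var}(X_n)=1/(4(2n+1))$ satisfying $n\,\mathrm{Var}(X_n)\to 1/8$. An alternative plan would be a direct local-limit argument: apply Stirling to $B(n,n)=\Gamma(n)^2/\Gamma(2n)$ and expand the density $f_{X_n}(1/2+y/\sqrt n)/\sqrt n$ of $\sqrt n(X_n-1/2)$ using $x(1-x)=1/4-y^2/n$, so that $(1/4-y^2/n)^{n-1}\to 4^{-(n-1)}e^{-4y^2}$; the Stirling estimate $B(n,n)\sim 2\sqrt{\pi/n}\,4^{-n}$ then yields the limiting density $\tfrac{2}{\sqrt\pi}e^{-4y^2}$, which is precisely the $\mathcal N(0,1/8)$ density. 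The Gamma-quotient proof is shorter and is what I would actually write down.
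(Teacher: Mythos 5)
Your proof is correct, but it takes a different route from the paper. You use the Gamma-quotient representation $X_n \stackrel{d}{=} U_n/(U_n+V_n)$ with $U_n,V_n \sim \gamma(n,1)$ independent, reduce to the classical CLT for sums of i.i.d.\ exponentials, and finish with Slutsky; all the steps check out (in particular the algebra $X_n - \tfrac12 = (U_n-V_n)/(2(U_n+V_n))$ and the resulting variance $2/16 = 1/8$). The paper instead writes down the density of $\sqrt{n}(X_n - \tfrac12)$ explicitly, applies Stirling's formula to $B(n,n)=\Gamma(n)^2/\Gamma(2n)$ to show this density converges pointwise to that of $\mathcal N(0,1/8)$, and then invokes Scheff\'e's theorem — which is essentially the ``alternative plan'' you sketched at the end. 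Your Gamma-quotient argument is arguably more elementary, avoiding both Stirling's approximation and Scheff\'e's theorem in favor of the Lindeberg--L\'evy CLT and Slutsky; the paper's density computation yields the stronger conclusion of pointwise (hence $L^1$) convergence of densities, though that extra strength is not exploited. Either approach is acceptable here.
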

    \begin{proof}
        The density of $\sqrt{n}\left(X_n - \frac{1}{2}\right)$ is given by
        \begin{align}
             &B(n, n)^{-1} \frac{1}{\sqrt{n}}\left(\frac{1}{2} + \frac{x}{\sqrt{n}}\right)^{n - 1} \left(\frac{1}{2} - \frac{x}{\sqrt{n}}\right)^{n - 1} I\Big\{0 < \frac{x}{\sqrt{n}} + \frac{1}{2} < 1\Big\} \nonumber \\
            ={}& B(n, n)^{-1} \frac{1}{\sqrt{n}} \frac{1}{4^{n - 1}} \left(1 - \frac{4x^2}{n}\right)^{n - 1} I\Big\{-\frac{\sqrt{n}}{2} < x < \frac{\sqrt{n}}{2}\Big\}. \label{density_beta}
        \end{align}
        By Stirling's approximation
        \begin{align*}
            B(n, n) = \frac{\Gamma(n)^2}{\Gamma(2n)} \sim \frac{\sqrt{\pi}}{2^{2n - 1} \sqrt{n}}
        \end{align*}
        holds, which implies that the density (\ref{density_beta}) converges pointwise to the density of the $\mathcal{N}(0, \tfrac{1}{8})$-distribution. An application of Scheff\'{e}'s theorem (c.f. \cite{scheffe1947}) yields the desired result.
    \end{proof}
\bigskip

{\bf Acknowledgements.} The authors would like to thank
M. Stein who typed this manuscript with considerable technical
expertise.
The work of H. Dette 
was partially supported by the Deutsche
Forschungsgemeinschaft (DFG Research Unit 1735; DE 502/26-2).
The work of D. Tomecki was supported  by     the Deutsche
Forschungsgemeinschaft (RTG 2131). The authors would like to thank an unknown referee for the careful reading of 
and the constructive comments on an earlier version of this paper.

\bigskip
%~ \bibliographystyle{apalike}

\bibliography{quellen}

\end{document}